\setlist[enumerate,2]{label={(\theenumi.\theenumii)},ref={(\theenumi.\theenumii)},leftmargin=1cm}
\setlist[enumerate,1]{label={(\theenumi)},ref={(\theenumi)},leftmargin=1cm}
\tikzset{>=latex'}
\def\restrict#1{\raise-.5ex\hbox{\ensuremath|}_{#1}}
\crefname{enumi}{}{}
\crefname{enumii}{}{}
\def\csname ver@etex.sty\endcsname{3000/12/31}
 \def\author@andify{
 \nxandlist {\unskip{\kern.3cm} \penalty-2}
 {\unskip {\kern.3cm} \penalty-2}
 {\unskip {\kern.3cm} \penalty-2}}
\newcommand{\orcid}[1]{\unskip {} \raisebox{.4ex}{\href{https://orcid.org/#1}{\resizebox{.25cm}{.25cm}{\includegraphics{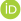}}}}}
\definecolor{citegreen}{rgb}{0,0.3,0}
\definecolor{refred}{rgb}{0.5,0,0}
\let\oldemail\email
\let\email\relax
\def\email#1{\oldemail{\href{mailto:#1}{\textcolor{black}{#1}}}}
\title{The equivalence of isocapacitary notions of mass}
\author[L.~Benatti]{Luca Benatti\orcid{0000-0002-4685-7443}}
\address{L.~Benatti, University of Vienna,
Oskar-Morgenstern-Platz 1, 1090, Vienna, Austria}
\email{luca.benatti@unive.ac.at}
\renewcommand{\ncapa}{\mathfrak{c}}
\newcommand{\sml}[1]{%
  \mathchoice%
    {#1}% displaystyle
    {#1}% textstyle
    {\scaleto{(#1)}{5pt}}% scriptstyle
    {\scaleto{(#1)}{4pt}}% scriptscriptstyle
}
\begin{document}
 
\begin{abstract}
In this short note, we will prove the equivalence of the isocapacitary notions of mass. This family also includes G. Huisken's isoperimetric mass and  J. L. Jauregui's isocapacitary mass.
\end{abstract}  
\maketitle

\noindent MSC (2020): 
53C21, %Methods of global Riemannian geometry, including PDE methods; curvature restrictions [See also 58J60] Recent zbMATH articles in MSC 53C21  5374
53E10, %FLOW RELATED TO MEAN CURVATURE
38C35, %Integration on manifolds; measures on manifolds;
49J45. %Methods involving semicontinuity and convergence; relaxation

\smallskip

\noindent \underline{\smash{Keywords}}: concepts of mass, isoperimetric mass, isocapacitary mass, nonnegative scalar curvature.

\section{Introduction}
Consider a smooth complete Riemannian $3$-manifold $(M,g)$. In a previous work with M. Fogagnolo and L. Mazzieri \cite{benatti_nonlinear_2023}, we introduced a one-parameter family of global masses.
\begin{definition}[Isocapacitary masses]\label{def:masses}
Fix $p \in [1,3)$. We define the \textit{quasilocal iso-$p$-capacitary mass} for any bounded $\Omega \Subset M$ as
\begin{equation}\label{eq:QLisopmass}
    \ma^{\sml{p}}_{\iso}(\Omega)\coloneqq \frac{1}{2 \pi p \ncapa_p(\partial \Omega)^{\frac{2}{3-p}}} \left( \abs{\Omega}- \frac{4\pi}{3} \ncapa_p(\partial \Omega)^{\frac{3}{3-p}} \right),
\end{equation}
where $\ncapa_p(\partial \Omega)$ is the $p$-capacity, namely
\begin{equation}\label{eq:pcapacity}
    \ncapa_p(\partial \Omega) \coloneq \inf\set{\frac{1}{4\pi}\left(\frac{p-1}{3-p}\right)^{p-1}\int_{M\smallsetminus \Omega}\abs{\nabla \varphi}^{p}\dif \mu\st \varphi\in \CS^\infty_c(M),  \varphi \geq 1 \text{ on }\Omega}.
\end{equation}
The \textit{iso-$p$-capacitary mass} of $(M,g)$ is then defined as
\begin{equation}\label{eq:isopmass}
    \ma^{\sml{p}}_{\iso} \coloneqq \sup_{(\Omega_j)} \limsup_{j\to +\infty} \ma^{\sml{p}}_{\iso}(\Omega_j),
\end{equation}
where the supremum is taken over all exhaustions $(\Omega_j)_{j \in \N}$ of bounded subsets in $M$.
\end{definition}

This definition is not entirely new. The case $p=2$ corresponds to the \textit{isocapacitary mass} introduced by J. L. Jauregui in \cite{jauregui_adm_2023}. For $p=1$, $\ma^{\sml{p}}_{\iso}$ coincides with the \textit{isoperimetric mass} $\ma_{\iso}$ introduced by G. Huisken \cite{huisken_isoperimetric_2009}. While the first assertion is rather immediate, the latter requires some additional remarks. First, we implicitly assume $(p-1)^{(p-1)} \coloneqq1$ when $p=1$ in \cref{eq:pcapacity}. Second, $\ma^{\sml{1}}_{\iso} \geq \ma_{\iso}$ follows directly from $4\pi \ncapa_1(\partial \Omega) \leq \abs{\partial \Omega}$. Although the reverse inequality is true in general, it becomes evident under the additional assumption that every $\Omega$ admits a bounded strictly outward minimizing hull $\Omega^*$. In such case, $\ncapa_1(\partial \Omega)= \abs{\partial \Omega^*}/(4\pi)$ and so $\ma^{\sml{1}}_{\iso}(\Omega) \leq \ma_{\iso}(\Omega^*)$. We postpone the proof of the equivalence without this assumption to \cref{prop:equivalence1mas}.

In manifolds $(M,g)$ with nonnegative scalar curvature, the family of masses introduced in \cref{def:masses} circumvents some of the limitations inherent to the ADM mass $\ma_{\ADM}$ \cite{arnowitt_coordinate_1961}. Indeed, the expression of the latter involves first-order derivatives of the metric coefficients, and thus requires them to be differentiable at least in some weak sense. Moreover, even for smooth metrics, the value of this mass may depend on the choice of the coordinate chart in which the coefficients are computed. R. Bartnik \cite{bartnik_mass_1986} and P. T. Chru\'sciel \cite{chrusciel_boundary_1986} proved that this is not the case whenever $(M,g)$ is $\CS^1_{\tau>1/2}$-asymptotically flat. We say that $(M,g)$ is $\CS^k_{\tau}$-asymptotically flat if there exists a coordinate chart $x=(x^1, x^2,x^3):M\smallsetminus K \to \R^3 \smallsetminus \set{\abs{x} \leq r}$ for some compact $K$ and $r>0$, and $g_{ij}(x) = \delta_{ij} + O_k(\abs{x}^{-\tau})$. Isocapacitary masses rely on remarkably milder assumptions. They require solely the concepts of perimeter, capacity, and volume. Therefore, they extend naturally to more general settings, such as manifolds equipped with continuous metrics -- a case beyond the scope of this note, yet indicative of a promising direction for future research. Moreover, they are manifestly global geometric invariants, as their definitions involve no reference to local coordinates. 

The mathematical expression in \cref{eq:QLisopmass} is designed to recover the mass of any Schwarzschild spatial manifold with nonnegative mass when evaluated on its cross-sections. Therefore, the resulting isocapacitary mass coincides with the ADM mass in this model scenario. Remarkably, this property remains valid in a more general setting. The equivalence $\ma_{\iso} = \ma_{\ADM}$ was established by J. L. Jauregui and D. A. Lee \cite{jauregui_lower_2019}, and independently by O. Chodosh, M. Eichmair, Y. Shi, and H. Yu \cite{chodosh_isoperimetry_2021} through a different argument. Both results require nonnegative scalar curvature but also stronger assumptions on the metric than those needed to define the ADM mass. However, the core of the technique developed in \cite{jauregui_lower_2019} can be exploited to extend this equivalence to the full generality allowed, as shown in the paper with M. Fogagnolo and L. Mazzieri \cite{benatti_isoperimetric_2025}. For the other isocapacitary masses, the equivalence $\ma^{\sml{p}}_{\iso}= \ma_{\ADM}$ was proved for $p=2$ by J. L. Jauregui \cite{jauregui_adm_2023} in manifolds with nonnegative scalar curvature and additional asymptotic assumptions on the metric. In \cite{benatti_nonlinear_2023}, we reestablished the result in the full generality permitted -- as in the case of the isoperimetric mass -- and extended it to all $p\in(1,3)$.

\medskip

The previous discussion highlights that all isocapacitary masses are all equivalent whenever the ADM mass is defined. A natural question then arises: what happens to this equivalence when the ADM mass is no longer in play? The strongest result we have been able to prove so far is that $\ma^{(p)}_{\iso} \to \ma_{\iso}$ as $p \to 1^+$ \cite{benatti_nonlinear_2023}. 

\begin{center}
\begin{tcolorbox}[frame empty, colback=gray!20, halign=center, width=.8\textwidth, sharp corners,boxsep=1mm, before skip=.2cm,after skip=.1cm]
    \textit{We assume throughout the paper that $(M,g)$ is a smooth, complete, connected,}\\ \textit{orientable, one-ended and noncompact Riemannian $3$-manifold.}
\end{tcolorbox}
\end{center}

The goal of this note is to establish the following theorem.

\begin{theorem}\label{thm:maintheorem}
    Let $(M,g)$ be a Riemannian $3$-manifold with nonnegative scalar curvature. Suppose that $M$ possibly has a smooth compact minimal boundary and no other compact minimal surface is contained in $M$. Assume that $M$ satisfies an Euclidean isoperimetric inequality, namely
    \begin{equation}\label{eq:isoperimetric_inequality}
        \exists\, \kst_I>0 \quad \text{such that} \quad \abs{\partial E}^3 \geq \kst_I \abs{E}^2 \quad \forall\, E \Subset M.
    \end{equation}
    Then, $\ma^{\sml{p}}_{\iso} = \ma_{\iso}$ for all $p\in [1,3)$.
\end{theorem}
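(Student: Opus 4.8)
The plan is to show that, for every $p\in(1,3)$, the mass $\ma^{\sml{p}}_{\iso}$ coincides with a single geometric invariant attached to the end of $(M,g)$, and then to identify that invariant with $\ma_{\iso}$; the endpoint $p=1$ is already the equality $\ma^{\sml{1}}_{\iso}=\ma_{\iso}$ recorded above and proved in \cref{prop:equivalence1mas}. The first observation is that the Euclidean isoperimetric inequality \eqref{eq:isoperimetric_inequality} forces $(M,g)$ to be non-parabolic, so that for every bounded $\Omega\Subset M$ with smooth boundary the $p$-capacitary potential $u$ attaining the infimum in \eqref{eq:pcapacity} exists: it is $p$-harmonic on $M\smallsetminus\Omega$, equals $1$ on $\partial\Omega$, and decays to $0$ at infinity. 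Its superlevel sets $\Omega_t:=\{u\ge t\}$ exhaust $M$ as $t\to0^+$, and since affine images of $p$-harmonic functions remain $p$-harmonic, $u/t$ is the $p$-capacitary potential of $\Omega_t$. Together with the constancy along the flow of the $p$-flux $\int_{\{u=s\}}\abs{\nabla u}^{p-1}$, this gives the explicit relation $\ncapa_p(\partial\Omega_t)=t^{1-p}\,\ncapa_p(\partial\Omega)$, which makes $(\Omega_t)_{t\to0}$ the natural exhaustion on which to evaluate \eqref{eq:isopmass} and lets me track the capacitary radius $\rho_p(\Omega):=\ncapa_p(\partial\Omega)^{1/(3-p)}$ explicitly.

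The crux is the monotonicity formula for the $p$-capacitary potential established in \cite{benatti_nonlinear_2023}: this is where nonnegative scalar curvature, the minimality of $\partial M$, and the absence of other closed minimal surfaces enter. Under these hypotheses the formula makes $t\mapsto\ma^{\sml{p}}_{\iso}(\Omega_t)$ monotone as $\Omega_t\nearrow M$, so that along every capacitary exhaustion the $\limsup$ in \eqref{eq:isopmass} is a genuine limit and the supremum defining $\ma^{\sml{p}}_{\iso}$ is attained in that limit. The resulting value is an invariant of the end, and the whole problem is reduced to showing that this invariant does not depend on $p$ and equals $\ma_{\iso}$.

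To carry out the identification I would combine the monotonicity just obtained — which lets me replace arbitrary exhaustions by the distinguished capacitary ones — with two further inputs. First, a co-area and Hölder estimate fed with \eqref{eq:isoperimetric_inequality} on each level set controls $\rho_p$ against the area radius $(\abs{\partial\Omega}/4\pi)^{1/2}$ and forces the level sets $\{u=t\}$ to become asymptotically round, so that the two radii agree to leading order along the end. Second, I would anchor the limit using the convergence $\ma^{\sml{p}}_{\iso}\to\ma_{\iso}$ as $p\to1^+$ already proved in \cite{benatti_nonlinear_2023}: the monotonicity structure propagates this equality from the endpoint to all $p\in(1,3)$, pinning the end-invariant to $\ma_{\iso}$ uniformly in $p$ and thereby giving $\ma^{\sml{p}}_{\iso}=\ma_{\iso}$ at once, without ever invoking a well-defined $\ma_{\ADM}$.

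I expect the final identification to be the main obstacle. The quasilocal mass \eqref{eq:QLisopmass} carries both the factor $1/p$ and the $p$-dependent radius $\rho_p$, whereas $\ma_{\iso}$ is built from area alone; a formal expansion along the end shows that the two reconcile only because $\rho_p$ differs from the area radius by a $p$-dependent shift — of size $(1-p)/2$ times the Schwarzschild parameter in the model case — which is exactly what compensates the factor $1/p$. Turning this heuristic cancellation into a rigorous statement under the bare hypotheses of the theorem, using only \eqref{eq:isoperimetric_inequality} and $R\ge0$, is the delicate point where the monotonicity formula must be pushed to its full strength. A secondary nuisance is that, by Sard's theorem, the level sets $\{u=t\}$ are regular only for almost every $t$, so the monotonicity computation and the passage to the limit must be performed in a weak sense and justified by approximation.
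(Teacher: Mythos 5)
Your proposal has a genuine gap at its center: the claimed monotonicity of $t\mapsto\ma^{\sml{p}}_{\iso}(\Omega_t)$ along the level sets of the $p$-capacitary potential is not a result of \cite{benatti_nonlinear_2023}, and there is no reason to expect it. The monotonicity formulas in that paper (and in the Agostiniani--Mazzieri--Oronzio line of work it builds on) concern $p$-harmonic analogues of the Hawking mass, built from boundary integrals of powers of $\abs{\nabla u}$ over level sets; the quasilocal mass \cref{eq:QLisopmass} contains the enclosed \emph{volume} $\abs{\Omega_t}$, which those formulas do not see at all. Even granting such a monotonicity, you would still need an argument to reduce the supremum over \emph{all} exhaustions in \cref{eq:isopmass} to the distinguished capacitary one. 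Most seriously, your anchoring step is a non sequitur: monotonicity in $t$ at each fixed $p$, combined with $\ma^{\sml{p}}_{\iso}\to\ma_{\iso}$ as $p\to1^+$, gives no information about the value of $\ma^{\sml{p}}_{\iso}$ at a fixed $p\in(1,3)$. ``Propagating the endpoint equality'' would require continuity or monotonicity of $p\mapsto\ma^{\sml{p}}_{\iso}$, which is essentially the statement of \cref{thm:maintheorem} itself, so the argument is circular. The subsidiary claim that coarea, H\"older, and \cref{eq:isoperimetric_inequality} force the level sets $\set{u=t}$ to become ``asymptotically round'' is also unsubstantiated; no such roundness is available (or needed) under the bare hypotheses.

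For contrast, the paper splits the theorem into two inequalities with different hypothesis requirements, neither of which your sketch establishes. The inequality $\ma_{\iso}\geq\ma^{\sml{p}}_{\iso}$ (\cref{prop:secondinequality}) uses no curvature at all: it combines a sharp asymptotic isoperimetric inequality (\cref{lem:sharpisoperimetric}), valid for all sets containing a fixed compact $K_{\ma}$, with a Euclidean symmetrization of the $p$-capacity along $p$-IMCF level sets, yielding the iso-$p$-capacitary estimate \cref{eq:finaleestimate}. The reverse inequality (\cref{prop:firstinequality}) is where $R\geq0$ and the minimal-surface hypotheses enter -- but through the $p=1$ flow, not the $p$-potential: Geroch monotonicity of the Hawking mass along weak IMCF \cite{huisken_inverse_2001}, the Jauregui--Lee criterion (\cref{prop:jaureguilee}) converting uniform Hawking-mass bounds into $\ma_{\iso}\leq\ma$, and the Bray--Miao/Xiao capacity comparison (\cref{prop:xiao}) with the hypergeometric expansion \cref{eq:firstorderG}. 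The $1/p$-versus-radius-shift cancellation you correctly anticipate as the crux is made rigorous there via the identity $\ma^{\sml{p}}_{\iso}(\Omega_t)=\bigl[\tfrac{1}{p}\ma_{\iso}(\Omega_t)+\tfrac{p-1}{p}\ma_{H}(\partial\Omega_t)\bigr](1+o(1))$, together with the Willmore limit of \cref{lem:contwillmore}. Your plan mislocates the curvature input in a nonexistent monotonicity along the $p$-capacitary flow and, as written, leaves both inequalities unproved.
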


The case $p=1$ will be addressed separately in \cref{prop:equivalence1mas}, since the equivalence follows directly from the definition. For all other $p$'s, we divide the theorem into two parts. In \cref{sec:greatest}, we will prove that the isoperimetric mass is the smallest among the isocapacitary masses. The proof builds on an asymptotic isoperimetric inequality and on a symmetrization argument, similar to \cite{benatti_nonlinear_2023}. In particular, neither the curvature assumption nor \cref{eq:isoperimetric_inequality} is needed.  In \cref{sec:smallest}, we will prove the reverse inequality. Since the proof is based on a Geroch-type monotonicity formula and \cite{jauregui_lower_2019}, all the assumptions in \cref{thm:maintheorem} are required. 
\medskip

\textbf{Acknowledgments.} This research was funded in part by the Austrian Science Fund (FWF) [grant DOI \href{https://www.fwf.ac.at/en/research-radar/10.55776/EFP6}{10.55776/EFP6}]. For open access purposes, the author has applied a CC BY public copyright license to any author-accepted manuscript version arising from this submission.

The author gratefully acknowledges support from the Simons Center for Geometry and Physics, Stony Brook University. Some of the research for this paper was carried out during the workshop \textit{Geometric Measure Theory on Metric Spaces with Applications to Physics and Geometry}, which was part of the thematic program \textit{Geometry and Convergence in Mathematical General Relativity}.

The author thanks M. Fogagnolo, whose insightful questions greatly improved this final version compared to the initial draft. The author is also grateful to C. Sinestrari for the intensive email exchange on topics related to this note. Thanks are also due to D. Carazzato, J. Jauregui, R. Perales, and A. Pluda for their valuable discussions and comments on the manuscript. 

\section{The case \texorpdfstring{$p=1$}{p=1}}\label{sec:p=1}
We start by proving the equivalence between the isoperimetric mass and the iso-$1$-capacitary mass. The statement does not require any particular assumption. Among capacities, the $1$-capacity is the closest to the notion of perimeter. Indeed, for any $\Omega\subseteq M$ bounded, we have
    \begin{equation}\label{eq:1cap}
    \ncapa_1(\partial \Omega) = \inf \set{ \frac{1}{4\pi}\abs{\partial E} \st\Omega\subseteq E \Subset M \text{ with smooth boundary}}.
    \end{equation}

\begin{proposition}\label{prop:equivalence1mas}
    Let $(M,g)$ be a Riemannian $3$-manifold. Then, $\ma^{\sml{1}}_{\iso}= \ma_{\iso}$.
\end{proposition}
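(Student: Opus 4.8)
The plan is to express both quasilocal masses through a single function of volume and area and then to exploit its monotonicity. Setting
\[
F(v,s) \coloneqq \frac{2}{s}\left( v - \frac{s^{3/2}}{6\sqrt{\pi}} \right),
\]
the identity $(4\pi)^{3/2}/(6\sqrt{\pi}) = 4\pi/3$ gives at once $\ma_{\iso}(\Omega) = F(\abs{\Omega},\abs{\partial\Omega})$ and $\ma^{\sml{1}}_{\iso}(\Omega) = F(\abs{\Omega},4\pi\ncapa_1(\partial\Omega))$ for every bounded $\Omega$. Since $\partial_v F = 2/s > 0$ and $\partial_s F = -2v/s^2 - (6\sqrt{\pi}\sqrt{s})^{-1} < 0$, the function $F$ increases in the volume slot and strictly decreases in the area slot; these two monotonicities are the engine of the whole argument.

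The inequality $\ma^{\sml{1}}_{\iso} \ge \ma_{\iso}$ is then immediate and is the direction already anticipated in the introduction: using $E = \Omega$ as a competitor in \eqref{eq:1cap} gives $4\pi\ncapa_1(\partial\Omega) \le \abs{\partial\Omega}$, so monotonicity in the area slot yields $\ma^{\sml{1}}_{\iso}(\Omega) = F(\abs{\Omega},4\pi\ncapa_1(\partial\Omega)) \ge F(\abs{\Omega},\abs{\partial\Omega}) = \ma_{\iso}(\Omega)$, and passing to the supremum over exhaustions preserves the inequality.

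The substance is the reverse inequality $\ma^{\sml{1}}_{\iso} \le \ma_{\iso}$. Fix an arbitrary exhaustion $(\Omega_j)$ and write $c_j \coloneqq \ncapa_1(\partial\Omega_j)$; it suffices to bound $\limsup_j \ma^{\sml{1}}_{\iso}(\Omega_j)$ by $\ma_{\iso}$. By \eqref{eq:1cap}, for each $j$ I can choose a bounded smooth $E_j \supseteq \Omega_j$ with $\abs{\partial E_j} \le 4\pi c_j + \varepsilon_j$ and $\varepsilon_j$ as small as I like. Using first that $F$ decreases in the area slot and then, since $\abs{E_j} \ge \abs{\Omega_j}$, that it increases in the volume slot,
\[
\ma_{\iso}(E_j) = F(\abs{E_j},\abs{\partial E_j}) \ge F(\abs{\Omega_j},4\pi c_j + \varepsilon_j) = \ma^{\sml{1}}_{\iso}(\Omega_j) - \Err_j ,
\]
where $\Err_j \coloneqq F(\abs{\Omega_j},4\pi c_j) - F(\abs{\Omega_j},4\pi c_j + \varepsilon_j) \ge 0$. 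If $(E_j)$ were an exhaustion with $\Err_j \to 0$, the proof would be complete after taking $\limsup_j$ and then the supremum over $(\Omega_j)$.

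The main obstacle is exactly this last caveat: the $E_j$ need not be nested, and the accumulated perimeter error must be kept harmless. I would enforce nestedness by passing to the running unions $\tilde E_{j+1} \coloneqq \tilde E_j \cup E_{j+1}$ (with $\tilde E_1 \coloneqq E_1$) and controlling their perimeters through the submodularity $\abs{\partial(A\cup B)} + \abs{\partial(A\cap B)} \le \abs{\partial A} + \abs{\partial B}$. Since $\tilde E_j \cap E_{j+1} \supseteq \Omega_j$ is a competitor for $c_j$ in \eqref{eq:1cap}, one has $\abs{\partial(\tilde E_j \cap E_{j+1})} \ge 4\pi c_j$, so that the excess $\eta_j \coloneqq \abs{\partial\tilde E_j} - 4\pi c_j$ obeys $\eta_{j+1} \le \eta_j + \varepsilon_{j+1}$, hence $\eta_j \le \sum_{i \le j}\varepsilon_i$. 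This makes $(\tilde E_j)$ a genuine nested exhaustion with $\tilde E_j \supseteq \Omega_j$, and the resulting mass defect satisfies
\[
\Err_j \;\lesssim\; \eta_j \sup_{s \ge 4\pi c_j}\abs{\partial_s F(\abs{\Omega_j},s)} \;\lesssim\; \eta_j\,\frac{\abs{\Omega_j}}{c_j^2}.
\]
The genuinely delicate point is that $\abs{\Omega_j}/c_j^2$ could a priori diverge; but along any subsequence with $\ma^{\sml{1}}_{\iso}(\Omega_j) \to L < \infty$ the defining expression forces $\abs{\Omega_j} \sim \tfrac{4\pi}{3}c_j^{3/2}$ and $c_j \to \infty$, so that $\abs{\Omega_j}/c_j^2 \to 0$; choosing the $\varepsilon_i$ summable (so the $\eta_j$ stay bounded) then gives $\Err_j \to 0$. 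The complementary case $L = +\infty$, in which $\ma_{\iso} = +\infty$ as well, is treated separately. I expect this error bookkeeping, together with the verification that $c_j \to \infty$, to be the only technical part of the argument; the rest is the elementary monotonicity of $F$.
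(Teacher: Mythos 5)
Your overall mechanism is the same as the paper's: pick near-optimal enclosing sets $E_j\supseteq\Omega_j$ from \cref{eq:1cap}, use the monotonicity of the mass functional in the volume and area slots, observe that the $E_j$ again form an exhaustion, and control the error. However, as written there are two genuine gaps, both created by your \emph{additive} calibration $\abs{\partial E_j}\le 4\pi c_j+\varepsilon_j$. First, the claim that along a subsequence with $\ma^{\sml{1}}_{\iso}(\Omega_j)\to L<\infty$ ``the defining expression forces $c_j\to\infty$'' does not follow from the expression alone: if $c_j$ stays bounded, the expression only forces $\abs{\Omega_j}$ to stay bounded, i.e.\ $\abs{M}<+\infty$. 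To exclude this you need the geometric fact that a complete manifold of finite volume has $\ncapa_1\equiv 0$ (coarea along the distance function produces separating surfaces of arbitrarily small area), and that degenerate situation is one your proof never treats: when $\ncapa_1(\partial\Omega_j)=0$ your function $F(\cdot,4\pi c_j)$ is undefined, $\ma^{\sml{1}}_{\iso}(\Omega_j)=+\infty$, and the bookkeeping $\Err_j\lesssim \eta_j\abs{\Omega_j}/c_j^{2}$ collapses. The paper devotes its first case precisely to this: if all $1$-capacities vanish, it picks $E_j\supseteq\Omega_j$ with $\abs{\partial E_j}\to 0$ and concludes $\ma_{\iso}(E_j)\to+\infty$ directly.

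Second, the case $L=+\infty$ is announced as ``treated separately'' but no treatment is given, and it is not trivial: there you must still produce $\ma_{\iso}=+\infty$, which is half the inequality (think of a cylindrical end, where $c_j$ stays bounded while $\abs{\Omega_j}\to\infty$). With your stated bound $\Err_j\lesssim\eta_j\abs{\Omega_j}/c_j^{2}$ and merely bounded $\eta_j$, the error can be of the same order as $\ma^{\sml{1}}_{\iso}(\Omega_j)$ itself, so $\ma_{\iso}(\tilde E_j)\ge \ma^{\sml{1}}_{\iso}(\Omega_j)-\Err_j$ need not diverge. The fix is to make the error multiplicative: since $2\abs{\Omega_j}/(4\pi c_j)^2=\ma^{\sml{1}}_{\iso}(\Omega_j)/(4\pi c_j)+O(c_j^{-1/2})$, one gets $\Err_j\le \tfrac{\eta_j}{4\pi c_j}\,\ma^{\sml{1}}_{\iso}(\Omega_j)+O\bigl(\eta_j c_j^{-1/2}\bigr)$, which with $\eta_j$ small compared to $c_1$ handles finite and infinite limsups at once, and removes any need for $c_j\to\infty$. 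This is exactly the paper's device: it calibrates the competitor as $4\pi c_j\ge(1-\varepsilon_j)\abs{\partial E_j}$ with $c_j\varepsilon_j\to 0$, yielding \cref{eq:zzzbound1cap}, i.e.\ $\ma^{\sml{1}}_{\iso}(\Omega_j)\le \ma_{\iso}(E_j)/(1-\varepsilon_j)+o(1)$, in one stroke. Finally, your submodularity construction of the nested $\tilde E_j$ is correct but unnecessary: since $E_j\supseteq\Omega_j$, every compact set is eventually contained in $E_j$, and along a subsequence realizing the limsup one can extract a nested subexhaustion (each bounded $E_j$ lies in some later $\Omega_m\subseteq E_m$); the paper simply notes that $(E_j)_{j\in\N}$ is again an exhaustion.
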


\begin{proof}
    Fix an exhaustion $(\Omega_j)_{j \in \N}$. We distinguish two cases. 
    
    Assume that $\ncapa_1(\partial \Omega)=0$ for all subset $\Omega \Subset M$. Consequently, $\ncapa_1(\partial \Omega_j)=0$ for all $j \in \N$, then $\ma^{\sml{1}}_{\iso}=+\infty$. By \cref{eq:1cap}, for each $j \in \N$ there exists a set $E_j \supseteq \Omega_j$ such that $\abs{\partial E_j} \to 0$ as $j \to +\infty$. In particular, $\ma_{\iso}(E_j) \to +\infty$ as $j\to +\infty$. Since $(E_j)_{j \in \N}$ is again an exhaustion, it follows that $\ma_{\iso}=+\infty$.

    Assume now there exists $\Omega\Subset M$ with positive $1$-capacity. Without loss of generality, we may assume $\Omega_j \supseteq \Omega$ and therefore $\ncapa_1(\partial \Omega_j) \geq \ncapa_1(\partial \Omega)>0$. Let $(\varepsilon_j)_{j \in \N}$ be such that $\varepsilon_j \in (0,1)$ and $\ncapa_1(\partial \Omega_j)\varepsilon_j \to 0 $ as $j \to +\infty$. By \cref{eq:1cap}, for all $j \in \N$ there exists a $E_j\supseteq \Omega_j$ such that $ 4\pi \ncapa_1(\partial \Omega_j)\geq (1- \varepsilon_j)\abs{\partial E_j}$. As before, $(E_j)_{j \in \N}$ is again an exhaustion. Moreover,
    \begin{equation}\label{eq:zzzbound1cap}
        \ma_{\iso}^{\sml{1}}(\Omega_j) \leq \frac{2}{(1-\varepsilon_j)\abs{\partial E_j}}\left(\abs{E_j}- \frac{(1-\varepsilon_j)^{\frac{3}{2}}\abs{\partial E_j}^{\frac{3}{2}}}{6 \sqrt{\pi}}\right) \leq \frac{\ma_{\iso}(E_j)}{1- \varepsilon_j} + \frac{\varepsilon_j\abs{\partial E_j}^{\frac{1}{2}}}{1- \varepsilon_j},
    \end{equation}
    which immediately yields  $\ma^{\sml{1}}_{\iso} \leq \ma_{\iso}$. The reverse inequality follows directly from $4 \pi \ncapa_1(\partial \Omega_j) \leq \abs{\partial \Omega_j}$, as mentioned in the introduction.
\end{proof}

\section{The isoperimetric mass is the biggest}\label{sec:greatest}
We are now going to prove the following proposition.
\begin{proposition}\label{prop:secondinequality}
    Let $(M,g)$ be a strongly $p$-nonparabolic Riemannian $3$-manifold with possibly empty compact boundary. Then $\ma_{\iso} \geq \ma_{\iso}^{\sml{p}}$ for every $p \in (1,3)$.
\end{proposition}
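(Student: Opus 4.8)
The plan is to prove the pointwise estimate $\ma^{\sml{p}}_{\iso}(\Omega)\leq\ma_{\iso}$ for every bounded $\Omega\Subset M$ (under strong $p$-nonparabolicity one has $\ncapa_p(\partial\Omega)>0$ automatically), from which the statement follows at once by taking the supremum over exhaustions of the limit superior. Strong $p$-nonparabolicity provides the $p$-capacitary potential $u$ of $\Omega$, which is $p$-harmonic on $M\smallsetminus\Omega$, equals $1$ on $\partial\Omega$ and decays to $0$ at infinity, so that the super-level sets $\Omega_t\coloneqq\{u>t\}$ exhaust $M$ as $t\to 0^+$. Two facts drive the argument. Since $u$ is $p$-harmonic, the flux $\Phi\coloneqq\int_{\{u=t\}}\abs{\nabla u}^{p-1}\dif\sigma$ does not depend on $t$, and integrating it against the coarea formula gives $4\pi\ncapa_p(\partial\Omega)=(\tfrac{p-1}{3-p})^{p-1}\Phi$; rescaling $u$ to the potential of $\Omega_t$ yields the exact scaling $\ncapa_p(\partial\Omega_t)=t^{1-p}\ncapa_p(\partial\Omega)$. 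Writing $V(t)\coloneqq\abs{\Omega_t}$ and $A(t)\coloneqq\abs{\{u=t\}}$, the coarea formula combined with Hölder's inequality furnishes
\begin{equation*}
    A(t)\leq\Phi^{\frac{1}{p}}\,\bigl(-V'(t)\bigr)^{\frac{p-1}{p}},
\end{equation*}
with equality precisely when $\abs{\nabla u}$ is constant on $\{u=t\}$.

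The symmetrization turns this inequality into a comparison with a rotationally symmetric model. I will construct a spherically symmetric manifold $(\bar M,\bar g)$ whose radial $p$-capacitary potential has the same flux $\Phi$ -- hence the same boundary capacity -- and the same volume profile $V$ as $u$. Radial symmetry makes $\abs{\nabla\bar u}$ constant on each sphere, so the displayed inequality is saturated in $\bar M$; matching the volume profile then forces the model areas to dominate, $\bar A(t)\geq A(t)$ at equal enclosed volume. By construction the iso-$p$-capacitary data of $\Omega$ is reproduced in the model, $\ma^{\sml{p}}_{\iso}(\Omega)=\ma^{\sml{p}}_{\iso}(\bar\Omega)$ (the latter computed in $\bar M$). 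Moreover, at fixed enclosed volume the quasilocal isoperimetric mass is a non-increasing function of the boundary capacity, so replacing $\ncapa_1(\partial\Omega_t)$ by the larger $A(t)/(4\pi)$ and using $\bar A(t)\geq A(t)$ we obtain, for almost every $t$,
\begin{equation*}
    \frac{2}{\bar A(t)}\Bigl(V(t)-\frac{\bar A(t)^{3/2}}{6\sqrt{\pi}}\Bigr)\leq\frac{2}{A(t)}\Bigl(V(t)-\frac{A(t)^{3/2}}{6\sqrt{\pi}}\Bigr)\leq\ma_{\iso}(\Omega_t).
\end{equation*}

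On the model the equality case of the coarea--Hölder relation, together with the explicit capacity scaling $\ncapa_p(\partial\Omega_t)=t^{1-p}\ncapa_p(\partial\Omega)$, becomes an exact first-order identity for the volume profile. Integrating it from $t=1$, where $V=\abs{\Omega}$, down to $t\to 0^+$ is the asymptotic isoperimetric inequality I need: it bounds $\ma^{\sml{p}}_{\iso}(\bar\Omega)$ from above by the limit superior of the symmetrized isoperimetric masses on the left-hand side of the previous display. Chaining all the inequalities gives
\begin{equation*}
    \ma^{\sml{p}}_{\iso}(\Omega)=\ma^{\sml{p}}_{\iso}(\bar\Omega)\leq\limsup_{t\to 0^+}\ma_{\iso}(\Omega_t)\leq\ma_{\iso},
\end{equation*}
the final inequality holding because $(\Omega_t)_{t\to 0^+}$ is an exhaustion of $M$. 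Taking the supremum over exhaustions of $\limsup_j\ma^{\sml{p}}_{\iso}(\Omega_j)$ then yields $\ma_{\iso}\geq\ma^{\sml{p}}_{\iso}$.

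I expect the real difficulty to be the symmetrization itself: producing $(\bar M,\bar g)$ so that the flux (hence the capacity) and the volume profile are preserved while the area inequality points in the right direction, and checking that the resulting profile defines a genuine, complete rotationally symmetric metric on which the ODE integration is legitimate. A secondary technical issue is the regularity of the foliation: $u$ may have critical points and $\{u=t\}$ need not be smooth for every $t$, so the flux identity, the coarea formula and the estimate $4\pi\ncapa_1(\partial\Omega_t)\leq A(t)$ must be invoked for almost every $t$, and the super-level sets compared with their strictly outward minimizing hulls exactly as in the $p=1$ reduction of \cref{prop:equivalence1mas}.
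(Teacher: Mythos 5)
There is a genuine gap, and it sits exactly where you flag the ``asymptotic isoperimetric inequality I need''. Your headline reduction --- proving the pointwise bound $\ma^{\sml{p}}_{\iso}(\Omega)\leq \ma_{\iso}$ for \emph{every} bounded $\Omega\Subset M$ --- is false under the hypotheses of the proposition. Strong $p$-nonparabolicity imposes no curvature or isoperimetric assumption, so $(M,g)$ may contain a ``bag of gold'': a region of very large volume $W$ hidden behind a neck of small area, with the manifold exactly Euclidean outside a compact set. Such a manifold is strongly $p$-nonparabolic (the $p$-capacitary potentials decay at infinity as in $\R^3$), and its $\ma_{\iso}$ is close to $0$, because the fixed excess volume washes out along any exhaustion as $\abs{\partial\Omega_j}\to+\infty$. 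Yet for $\Omega$ enclosing the bag through the neck, $\abs{\Omega}\approx W$ while $\ncapa_p(\partial\Omega)$ is small, so $\ma^{\sml{p}}_{\iso}(\Omega)$ is arbitrarily large: no pointwise inequality can hold. Correspondingly, the step where you assert that integrating the saturated coarea--H\"older identity on the model ``bounds $\ma^{\sml{p}}_{\iso}(\bar\Omega)$ from above by the limit superior of the symmetrized isoperimetric masses'' cannot be correct as stated: in the example the right-hand side is $\approx 0$ while the left-hand side is huge. The failure is structural: with no curvature hypothesis there is no monotonicity of quasilocal masses along the flow, and a $\limsup_{t\to 0^+}$ bound is purely asymptotic information that cannot be integrated back to $t=1$ so as to control $\abs{\Omega}$ by $\ncapa_p(\partial\Omega)$.

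What is missing is a \emph{uniform-in-$t$} isoperimetric input, and this is precisely what the paper supplies. \cref{lem:sharpisoperimetric} shows, by a short contradiction argument directly from the definition of $\ma_{\iso}$, that for every $\ma>\ma_{\iso}$ there is a compact $K_{\ma}$ such that \emph{all} sets containing $K_{\ma}$ --- hence all level sets $\Omega_t$ of the flow started at such a set --- satisfy
\begin{equation}
    \abs{\Omega_t}\leq \frac{\abs{\partial\Omega_t}^{\frac{3}{2}}}{6\sqrt{\pi}}+\frac{\ma}{2}\abs{\partial\Omega_t}.
\end{equation}
(This also explains why the pointwise claim is rescued only asymptotically: in the bag-of-gold example the neck set simply does not contain $K_{\ma}$.) The paper then restricts to exhaustions with $\Omega_j\supseteq K_{\ma}$ --- harmless, since $\ma^{\sml{p}}_{\iso}$ is a supremum of limsups along exhaustions --- and feeds this inequality at \emph{every} level $t$ into essentially the machinery you set up: constancy of the flux, H\"older plus coarea giving $\abs{\partial\Omega_t}^{p}\leq \ncapa_p[-V'(t)]^{p-1}$, and a radial competitor built from the volume profile $V(t)$, which is exactly your symmetrization in the special case where the model is flat $\R^3$ (so no existence or regularity of a curved model metric needs to be checked: the profile is only used to define a Lipschitz competitor for the Euclidean $p$-capacity of the ball of volume $\abs{\Omega}$). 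Integrating the resulting differential inequality, with a Taylor expansion of the error terms and a case split $\ma\leq 0$ versus $\ma>0$, yields $\ma^{\sml{p}}_{\iso}(\Omega_j)\leq \ma+O\bigl(\ncapa_p(\partial\Omega_j)^{-\frac{1}{3-p}}\bigr)$, whence $\ma^{\sml{p}}_{\iso}\leq\ma$ for every $\ma>\ma_{\iso}$. So your flux/H\"older/symmetrization skeleton coincides with the paper's, but the argument closes only once \cref{lem:sharpisoperimetric} replaces the unproved transfer step, and the conclusion must be asymptotic along exhaustions rather than pointwise.
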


Let $p \in (1,3)$. A Riemannian manifold $(M,g)$ is called \textit{strongly $p$-nonparabolic} if there exists a proper \textit{weak $p$-inverse mean curvature flow} ($p$-IMCF for short) with compact initial condition $\Omega$, which is a solution $w_p:M\smallsetminus \Omega \to \R$ to the following problem
\begin{equation}\label{eq:pIMCF}
    \begin{cases}
        \displaystyle\div\left(\frac{\nabla w_p}{\abs{\nabla w_p}^{2-p}}\right) &=& \abs{\nabla w_p}^p & \text{on }M \smallsetminus \Omega,\\
        w_p&=& 0 &\text{on } \partial \Omega,\\
        w_p(x) &\to& + \infty & \text{as }\mathrm{dist}(x,\partial \Omega) \to + \infty.
    \end{cases}
\end{equation}
in the sense that $u_p \coloneqq \ee^{-w_p/(p-1)}$ is the $p$-capacitary potential of the condenser $(\Omega, M)$. Spaces with a positive isoperimetric constant are examples of strongly $p$-nonparabolic manifolds, as shown for example in \cite[Theorem 3.3]{benatti_proper_2025}. Hence, \cref{prop:secondinequality} applies to the setting of \cref{thm:maintheorem}. Moreover, by the maximum principle, a large class of subsets $\Omega$ admits a solution to \cref{eq:pIMCF}.

\begin{definition}
    We say that a compact $\Omega\Subset M$ is a \textit{coexterior domain} if $M\smallsetminus \Omega$ is a domain whose boundary is a closed $\CS^{1,1}$-hypersurface and each connected component of $\partial M$ is either contained in or disjoint from $\Omega$.
\end{definition}

If $\Omega \supseteq \partial M$, the solution to \cref{eq:pIMCF} is unique. Conversely, if $\partial M \smallsetminus \Omega \neq \varnothing$, different solutions may be constructed. For instance, one may impose $w_p = 0$ on $\partial M \smallsetminus \Omega$ or, alternatively, one may compactly fill in the remaining boundary components of $\partial M$ and solve the problem on the resulting Riemannian manifold. However, such an ambiguity never arises in the present note.

\smallskip

The proof of \cref{prop:secondinequality} follows the same strategy as \cite[Theorem 5.5]{benatti_nonlinear_2023}. Besides including it for the sake of completeness, we also do so to highlight and fix a minor flaw in the original argument. In fact, a step in the proof requires the isoperimetric mass to be nonnegative\footnote{One can use \cite[(5.11)]{benatti_nonlinear_2023} to bound \cite[(5.9)]{benatti_nonlinear_2023} from above only if $\ma_{\iso} \geq 0$.}. Nevertheless, the conclusion remains valid since asymptotic flatness is assumed: the isoperimetric mass is nonnegative under this assumption, as shown in the subsequent paper by J. L. Jauregui, D. A. Lee, and R. Unger \cite{jauregui_note_2024}. However, \cref{prop:secondinequality} does not require any asymptotic behavior of the metric, and thus a refined argument becomes necessary.

The next lemma isolates the part of the argument leading to the asymptotic isoperimetric inequality \cite[(5.6)]{benatti_nonlinear_2023}. Here, however, the inequality is stated for large sets in the sense of inclusion rather than volume. This formulation is in fact the required one, and it does not require asymptotic flatness.

\begin{lemma}\label{lem:sharpisoperimetric}
    Assume that $\ma_{\iso} <+ \infty$. Then for every $\ma > \ma_{\iso}$ there exists a compact subset $K_{\ma}$ such that for every $\Omega \supseteq K_{\ma}$ it holds 
    \begin{equation}\label{eq:sharpisoperimetric}
        \abs{\Omega} \leq \frac{\abs{\partial \Omega}^{\frac{3}{2}}}{6 \sqrt{\pi}} + \frac{\ma}{2}\abs{\partial \Omega}.
    \end{equation}
\end{lemma}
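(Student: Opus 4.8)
The plan is to observe that inequality \eqref{eq:sharpisoperimetric} is merely a rewriting of the pointwise bound $\ma_{\iso}(\Omega) \le \ma$ on the quasilocal isoperimetric mass $\ma_{\iso}(\Omega) = \tfrac{2}{\abs{\partial \Omega}}\bigl(\abs{\Omega} - \tfrac{1}{6\sqrt{\pi}}\abs{\partial \Omega}^{3/2}\bigr)$. Indeed, multiplying $\ma_{\iso}(\Omega) \le \ma$ by $\abs{\partial\Omega}/2 > 0$ and rearranging returns exactly \eqref{eq:sharpisoperimetric}. Thus, once $\ma > \ma_{\iso}$ is fixed, the lemma is equivalent to the assertion that the quasilocal mass stays below $\ma$ on all sets that are large in the sense of inclusion, and the whole content is the soft fact that the \emph{supremum} defining $\ma_{\iso}$ is attained only asymptotically along exhaustions.

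First I would argue by contradiction, exploiting that $\ma_{\iso}$ is defined as a supremum of $\limsup$'s over exhaustions. Suppose that for some $\ma > \ma_{\iso}$ no compact $K_\ma$ as in the statement exists. Negating the conclusion, for every compact $K \Subset M$ there is a bounded set $\Omega \supseteq K$ with $\ma_{\iso}(\Omega) > \ma$. I would then promote this unordered family into a single exhaustion that violates the definition of $\ma_{\iso}$.

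To do so, I would fix any exhaustion $(K_j)_{j\in\N}$ of $M$ by compact sets and build $(\Omega_j)$ inductively: having chosen the bounded set $\Omega_{j-1}$, I apply the negated statement to the compact set $\overline{\Omega_{j-1}} \cup K_j$ to produce a bounded $\Omega_j \supseteq \overline{\Omega_{j-1}} \cup K_j$ with $\ma_{\iso}(\Omega_j) > \ma$. By construction the sequence is increasing and contains $(K_j)$, hence it is a legitimate exhaustion of $M$. Since $\ma_{\iso}(\Omega_j) > \ma$ for every $j$, passing to the $\limsup$ gives $\ma_{\iso} \ge \limsup_{j} \ma_{\iso}(\Omega_j) \ge \ma$, contradicting $\ma > \ma_{\iso}$.

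The argument is essentially soft, and I do not expect any analytic obstacle. The one point requiring care is the passage from the family supplied by the contradiction hypothesis to a genuinely nested, exhausting sequence; the diagonal-type inductive choice above, using $\overline{\Omega_{j-1}}$ compact in the complete manifold $M$, is exactly what secures this. It is also here that the chosen notion of "large"---inclusion rather than volume---pays off: a family that is large in inclusion is automatically compatible with the nested structure of exhaustions, whereas largeness in volume need not respect that order, which is precisely why no asymptotic flatness (nor any curvature hypothesis) is needed.
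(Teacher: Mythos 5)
Your proposal is correct and follows essentially the same route as the paper: the paper also observes that \eqref{eq:sharpisoperimetric} is the rearrangement of $\ma_{\iso}(\Omega)\leq\ma$, negates the conclusion, and turns the resulting family of sets with $\ma_{\iso}(\Omega)\geq\ma$ into an exhaustion to derive $\ma_{\iso}\geq\ma>\ma_{\iso}$. Your inductive choice $\Omega_j\supseteq\overline{\Omega_{j-1}}\cup K_j$ merely makes explicit the nesting that the paper leaves implicit in the phrase ``since $(\Omega_j)_{j\in\N}$ is still an exhaustion.''
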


\begin{proof}
    We argue by contradiction. Suppose that there exists $\ma> \ma_{\iso} $ such that, for every $K$ one can find $\Omega \supseteq K$ for which
    \begin{equation}
        \abs{\Omega} > \frac{\abs{\partial \Omega}^{\frac{3}{2}}}{6 \sqrt{\pi}} + \frac{\ma}{2}\abs{\partial \Omega}.
    \end{equation}
    In particular, $\ma_{\iso}(\Omega) \geq \ma$. Hence, we can build an exhaustion of $(K_j)_{j \in \N}$ such that $\ma_{\iso}(\Omega_j)\geq \ma$ for some compact $\Omega_j \supseteq K_j$. Since $(\Omega_j)_{j\in \N}$ is still an exhaustion, we conclude that $\ma_{\iso} \geq \ma > \ma_{\iso}$.
\end{proof}

\begin{proof}[Proof of \cref{prop:secondinequality}]
Assume $\ma_{\iso}<+\infty$, otherwise the statement is trivial. Let $\ma>\ma_{\iso}$ and $K_{\ma}\supseteq \partial M$ as in \cref{lem:sharpisoperimetric}.
Since $M$ is $p$-nonparabolic, its volume is not finite. Hence, we can assume that $\abs{K_{\ma}}$ is sufficiently large, so that $\abs{\partial \Omega} \geq 1$. In particular, the isoperimetric inequality
\begin{equation}\label{eq:euclideanisoperimetric}
    \abs{\Omega} \leq \left( \frac{1}{6\sqrt{\pi}} + \frac{\abs{\ma}}{2}\right) \abs{\partial \Omega}^{\frac{3}{2}}
\end{equation}
holds for every $\Omega \supseteq K_{\ma}$. In particular, we can choose $\abs{K_{\ma}}$ as big as we want in the following steps, so that both $\abs{\Omega}$ and $\abs{\partial \Omega}$ are sufficiently large for any $\Omega \supseteq K_{\ma}$.

Fix now any $\Omega\supseteq K_{\ma}$ and evolve it using the solution $w_p:M \smallsetminus \Omega \to \R$ to the problem \cref{eq:pIMCF}. Denote $\Omega_t \coloneqq \set{w_p\leq - (p-1) \log t }$ and $V(t) \coloneqq \abs{\Omega_t \smallsetminus \set{\abs{\nabla w_p}=0}}$. By H\"older inequality, we get that
\begin{equation}\label{eq:area-pcapinequality}
    \abs{\partial \Omega_t}^p \leq \left(\int_{\partial \Omega_t} \frac{1}{\abs{\nabla w_p}}\dif \sigma\right)^{p-1}\left(\int_{\partial \Omega_t} \abs{\nabla w_p}^{p-1}\dif \sigma\right) \leq \ncapa_p[-V'(t)]^{p-1},
\end{equation}
where we denoted
\begin{equation}
    \ncapa_p\coloneqq 4 \pi \left( \frac{3-p}{p-1}\right)^{p-1}\ncapa_p(\partial \Omega).
\end{equation}

Let $R(t)$ be such that $V(t)= 4 \pi R(t)^3/3$ and $v:\set{\abs{x} \geq R(1)} \subseteq \R^3\to (0,1] $ be such that $\set{v=t} = \set{\abs{x}=R(t)}$ for every $t \in (0,1]$. Observe that $v =1 $ on $\set{\abs{x}=R(1)}$. Moreover, $v$ is Lipschitz since 
\begin{align}
    \abs{\nabla v} &=- 4\pi \left( \frac{3}{4 \pi}\right)^{\frac{2}{3}} \frac{V(t)^{\frac{2}{3}}}{V'(t)} \overset{\cref{eq:area-pcapinequality}}{\leq}
     4\pi \left( \frac{3}{4 \pi}\right)^{\frac{2}{3}} \ncapa_p^{\frac{1}{p-1}}\frac{V(t)^{\frac{2}{3}}}{\abs{\partial \Omega_t}^{\frac{p}{p-1}}} \\    
     \overset{\cref{eq:euclideanisoperimetric}}&{\leq}  \left(\frac{1}{6\sqrt{\pi}} + \frac{\abs{\ma}}{2}\right)^{\frac{2p}{3(p-1)}}\ncapa_p^{\frac{1}{p-1}}V(t)^{-\frac{2}{3(p-1)}}
\end{align}
and $V(t) \geq \abs{K_{\ma}}$. Using $v$ as a competitor in the definition of the $p$-capacity of $\set{\abs{x} = R(1)}$, coarea formula, and Jensen's inequality, we then have
\begin{align}
    \abs{\Omega} &= \frac{4 \pi}{3} \ncapa_p(\set{\abs{x} \leq R(1)})^{\frac{3}{3-p}}\leq \frac{(4 \pi )^{-\frac{p}{3-p}}}{3}\left( \frac{p-1}{3-p}\right)^{\frac{3(p-1)}{3-p}}\left[\int_{\set{\abs{x} \geq R(1)}} \abs{\nabla v}^{p} \dif x\right]^{\frac{3}{3-p}}\\
    &=\frac{(4 \pi )^{-\frac{p}{3-p}}}{3}\left( \frac{p-1}{3-p}\right)^{\frac{3(p-1)}{3-p}}\left[\int_0^1\int_{\set{v=t}} \abs{\nabla v}^{p-1} \dif \sigma\dif t\right]^{\frac{3}{3-p}}\\
    & = 3^{\frac{3(p-1)}{(3-p)}}\left( \frac{p-1}{3-p}\right)^{\frac{3(p-1)}{3-p}}\left[\int_0^1\frac{[V(t)]^{\frac{2p}{3}}}{[-V'(t)]^{p-1}}\dif t\right]^{\frac{3}{3-p}}\\
    & = 3^{\frac{3(p-1)}{(3-p)}}\left( \frac{p-1}{3-p}\right)^{\frac{3(p-1)}{3-p}}\int_0^1\frac{[V(t)]^{\frac{2p}{3-p}}}{[-V'(t)]^{\frac{3(p-1)}{3-p}}}\dif t.\label{eq:zzzlastinequalitypolyaszegov}
\end{align}
  The sharp isoperimetric inequality in \cref{lem:sharpisoperimetric} yields
  \begin{equation}\label{eq:zzzlowerboundisoperimetricmneg}
        [6 \sqrt{\pi} V(t)]\left( 1- \frac{3\sqrt{\pi}\ma}{[6 \sqrt{\pi}V(t)]^{\frac{1}{3}}}\right)\overset{ }\leq \abs{\partial \Omega_t}^\frac{3}{2}.
    \end{equation}
Moreover, the function 
        \begin{equation}
            [(6 \sqrt{\pi}\abs{K_{\ma}})^\frac{2}{3},+\infty) \ni\abs{\partial \Omega_t}  \mapsto \left(\frac{1}{(\sqrt{\abs{\partial \Omega_t}})} \right)^{\frac{3-p}{p-1}}  \left( 1 + \frac{3\ma \sqrt{\pi}  }{\sqrt{\abs{\partial \Omega_t}}}\right)
        \end{equation}
    is monotone nondecreasing, provided $\abs{K_\ma}$ is sufficiently large. Hence,
    \begin{align}
             6 \sqrt{\pi} V(t)\overset{\cref{eq:sharpisoperimetric}}&{\leq}\abs{\partial \Omega_t}^{\frac{3}{2}+ \frac{3-p}{2(p-1)}} \left(\frac{1}{(\sqrt{\abs{\partial \Omega_t}})} \right)^{\frac{3-p}{p-1}}  \left( 1 + \frac{3\ma \sqrt{\pi}  }{\sqrt{\abs{\partial \Omega_t}}}\right) \\    \overset{\cref{eq:area-pcapinequality}}&{\leq}[-\ncapa_p^{\frac{1}{p-1}} V'(t)]\left(\frac{1}{(\sqrt{\abs{\partial \Omega_t}})} \right)^{\frac{3-p}{p-1}}  \left( 1 + \frac{3\ma \sqrt{\pi}  }{\sqrt{\abs{\partial \Omega_t}}}\right)\\
             \overset{\cref{eq:zzzlowerboundisoperimetricmneg}}&{\leq}\frac{[-\ncapa_p^{\frac{1}{p-1}} V'(t)]}{[6 \sqrt{\pi} V(t)]^{\frac{3-p}{3(p-1)}}}\left[1- \frac{3\ma \sqrt{\pi}}{[6 \sqrt{\pi} V(t)]^\frac{1}{3}}\right]^{-\frac{3-p}{3(p-1)}}\left[1+ \frac{3 \ma\sqrt{\pi}}{[6 \sqrt{\pi} V(t)]^{\frac{1}{3}}}\left( 1 - \frac{3\ma\sqrt{\pi}}{[6 \sqrt{\pi} V(t)]^\frac{1}{3}}\right)^{-\frac{1}{3}}\right].\\
        \end{align}
        Writing the first-order Taylor polynomial of the function
        \begin{equation}
            \frac{3 \ma\sqrt{\pi}}{[6 \sqrt{\pi} V(t)]}\mapsto\left[1- \frac{3\ma \sqrt{\pi}}{[6 \sqrt{\pi} V(t)]^\frac{1}{3}}\right]^{-\frac{2p}{3(p-1)}}\left[1+ \frac{3 \ma\sqrt{\pi}}{[6 \sqrt{\pi} V(t)]^{\frac{1}{3}}}\left( 1 - \frac{3\ma\sqrt{\pi}}{[6 \sqrt{\pi} V(t)]^\frac{1}{3}}\right)^{-\frac{1}{3}}\right]^{\frac{2p}{3-p}}
        \end{equation}
        and estimating the remainder, we have that
        \begin{equation}\label{eq:zzz-mneg-differntial}
            [6 \sqrt{\pi} V(t)]^{\frac{2p}{3-p}}\leq  \frac{[-\ncapa_p^{\frac{1}{p-1}} V'(t)]^{\frac{2p}{3-p}}}{[6 \sqrt{\pi} V(t)]^{\frac{2p}{3(p-1)}}}\left[ 1+ \frac{4 p^2}{3(3-p)(p-1)}\frac{3\ma\sqrt{\pi}}{[6 \sqrt{\pi} V(t)]^\frac{1}{3}}+k_p \frac{9\ma^2\pi }{[6 \sqrt{\pi} V(t)]^{\frac{2}{3}}}\right],
        \end{equation}
        for some $k_p>0$ depending only on $p$. Integrating \cref{eq:zzz-mneg-differntial} and changing the variable, one obtains
        \begin{align}
            \int_0^1\frac{[V(t)]^{\frac{2p}{3-p}}}{[-V'(t)]^{\frac{3(p-1)}{3-p}}}\dif t &\leq \frac{\ncapa_p^{\frac{3}{3-p}+ \frac{1}{p-1}}}{(6 \sqrt{\pi})^\frac{4p^2}{3(p-1)(3-p)}}\int_{\abs{\Omega}}^{+\infty}V^{-\frac{2p}{3(p-1)}} \left( 1+ \frac{4p^2}{3(3-p)(p-1)} \frac{3 \ma\sqrt{\pi}}{[6 \sqrt{\pi}V]^{\frac{1}{3}}} +k_p\frac{9\ma^2 \pi}{[6 \sqrt{\pi}V]^\frac{2}{3}}\right)\dif V\\
            &= \frac{3(p-1)}{(3-p)}\frac{\ncapa_p^{\frac{3}{3-p}+ \frac{1}{p-1}}}{(6 \sqrt{\pi})^\frac{4p^2}{3(p-1)(3-p)}}\abs{\Omega}^{-\frac{3-p}{3(p-1)}}\left[1+\frac{2p^2}{3(p-1)}\frac{3 \ma\sqrt{\pi}}{[6 \sqrt{\pi}\abs{\Omega}]^{\frac{1}{3}}} + k'_p \frac{9\ma^2 \pi}{[6 \sqrt{\pi}\abs{\Omega}]^\frac{2}{3}}\right],
        \end{align}
        where $k'_p>0$ depends again only on $p$. Plugging it into \cref{eq:zzzlastinequalitypolyaszegov}, we have the following iso-$p$-capacitary estimate
        \begin{equation}
            \abs{\Omega}^{\frac{2p}{3(p-1)}}\leq \left( \frac{4 \pi}{3}\right)^{\frac{2p}{3(p-1)}} \ncapa_p(\partial \Omega)^{\frac{3}{3-p}+ \frac{1}{p-1}}\left[1+\frac{2p^2}{3(p-1)}\frac{3 \ma\sqrt{\pi}}{[6 \sqrt{\pi}\abs{\Omega}]^{\frac{1}{3}}} + k'_p \frac{9\ma^2}{[6 \sqrt{\pi}\abs{\Omega}]^\frac{2}{3}}\right],
        \end{equation}
        and then
        \begin{equation}\label{eq:finaleestimate}
            \abs{\Omega} \leq\frac{4 \pi}{3} \ncapa_p(\partial \Omega)^{\frac{3}{3-p}}\left[1+ \frac{3p\ma \sqrt{\pi} }{[6 \sqrt{\pi} \abs{\Omega}]^{\frac{1}{3}}} + k'_p \frac{18\ma^2 \pi}{[6 \sqrt{\pi}\abs{\Omega}]^\frac{2}{3}}\right].
        \end{equation}

We now distinguish two cases.
\begin{case}[$\ma\leq0$]
        The function
        \begin{equation}
            [\abs{K_{\ma}},+\infty)\ni \abs{\Omega} \mapsto \left[1+ \frac{3p\ma \sqrt{\pi} }{[6 \sqrt{\pi} \abs{\Omega}]^{\frac{1}{3}}} + k'_p \frac{18\ma^2 \pi}{[6 \sqrt{\pi}\abs{\Omega}]^\frac{2}{3}}\right]\leq 1 
        \end{equation}
        is monotone nondecreasing. Hence, \cref{eq:finaleestimate} yields
        \begin{equation}
            \abs{\Omega} \leq \frac{4 \pi}{3}\ncapa_p(\partial \Omega),
        \end{equation}    
        that plugged back into \cref{eq:finaleestimate} gives
        \begin{equation}\label{eq:finalmneg}
          \abs{\Omega} \leq\frac{4 \pi}{3} \ncapa_p(\partial \Omega)^{\frac{3}{3-p}}\left[1+ \frac{3p\ma }{2\ncapa_p(\partial \Omega)^{\frac{1}{3-p}}} + k'_p \frac{9\ma^2 \pi}{2\ncapa_p(\partial \Omega)^{\frac{2}{3-p}}}\right]  .
        \end{equation}
\end{case}
\begin{case}[$\ma> 0$] 
If
\begin{equation}
    \abs{\Omega} \leq \frac{4\pi}{3} \ncapa_p(\partial \Omega),
\end{equation}
then $\ma^{\sml{p}}_{\iso}(\Omega) \leq 0 < \ma$. Otherwise, \cref{eq:finaleestimate} yields
\begin{equation}\label{eq:finalmpos}
          \abs{\Omega} \leq\frac{4 \pi}{3} \ncapa_p(\partial \Omega)^{\frac{3}{3-p}}\left[1+ \frac{3p\ma }{2\ncapa_p(\partial \Omega)^{\frac{1}{3-p}}} + k'_p \frac{9\ma^2 \pi}{2\ncapa_p(\partial \Omega)^{\frac{2}{3-p}}}\right]  .
        \end{equation}
\end{case}

Take now an exhaustion $(\Omega_j)_{j \in \N}$ such that $\Omega_j \supseteq K_{\ma}$. Rearranging either \cref{eq:finalmneg,eq:finalmpos}, we can show that
        \begin{equation}\label{eq:zzz-almostdone-mneg}
            \limsup_{j \to +\infty}\ma^{\sml{p}}_{\iso}(\Omega_j) \leq \limsup_{j\to +\infty}\ma + 3 k'_p\ma^2 \ncapa_p(\partial \Omega_j)^{-\frac1{3-p}}= \ma,
        \end{equation}
        which implies $\ma^{\sml{p}}_{\iso} \leq \ma$. Since $\ma>\ma_{\iso}$ is generic, we conclude the proof.
\end{proof}

\section{The isoperimetric mass is the smallest}\label{sec:smallest}
We are now going to prove the following proposition.
\begin{proposition}\label{prop:firstinequality}
    Let $(M,g)$ be a Riemannian $3$-manifold with nonnegative scalar curvature. Suppose that $M$ possibly has a smooth compact minimal boundary and no other compact minimal surface is contained in $M$. Assume that \cref{eq:isoperimetric_inequality} holds. Then, $\ma_{\iso} \leq \ma_{\iso}^{\sml{p}}$ for every $p \in (1,3)$.
\end{proposition}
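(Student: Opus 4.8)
The plan is to realize the two masses as the endpoints of a single monotone quantity attached to the weak $p$-IMCF. Concretely, I would solve \cref{eq:pIMCF} for a coexterior domain $\Omega\supseteq\partial M$ (which exists because \cref{eq:isoperimetric_inequality} forces strong $p$-nonparabolicity via \cite{benatti_growth_2025}), and define on the level sets $\partial\Omega_t$ of the solution $w_p$ a Hawking-type nonlinear mass $\mathfrak{m}_p(t)$. The heart of the matter is a Geroch-type monotonicity: under nonnegative scalar curvature and the minimal-boundary hypotheses, $\mathfrak{m}_p(t)$ is monotone nondecreasing in $t$. This is the reverse counterpart of the Hölder inequality \cref{eq:area-pcapinequality} used in \cref{sec:greatest}, and it is exactly here that the curvature assumption, absent from \cref{prop:secondinequality}, is spent. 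The strategy is then to squeeze $\mathfrak{m}_p$ between $\ma_{\iso}$ from below, using the technique of \cite{jauregui_lower_2019}, and $\ma^{\sml{p}}_{\iso}$ from above, using a Hölder-type bound of the kind underlying \cref{eq:area-pcapinequality}.

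For the lower endpoint I would first reduce to large, near-isoperimetric regions. Fix an exhaustion $(\Omega_j)$ with $\limsup_j\ma_{\iso}(\Omega_j)\ge\ma_{\iso}-\varepsilon$; replacing each $\Omega_j$ by an isoperimetric region (or a near-minimizer) $E_j$ of the same volume does not decrease the defect, since at fixed volume $\ma_{\iso}(\Omega)=\frac{2\abs{\Omega}}{\abs{\partial\Omega}}-\frac{\abs{\partial\Omega}^{1/2}}{3\sqrt{\pi}}$ is monotone under shrinking of the perimeter, so $\ma_{\iso}(E_j)\ge\ma_{\iso}(\Omega_j)$, and for $j$ large $E_j\supseteq\partial M$. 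On the almost-CMC boundary $\Sigma_j=\partial E_j$, the Jauregui--Lee comparison between the isoperimetric defect and the Hawking mass should give $\mathfrak{m}_p(\Sigma_j)\ge\ma_{\iso}(E_j)-o(1)$ as $j\to\infty$, the error being controlled by \cref{eq:isoperimetric_inequality}. For the upper endpoint I would flow $\Sigma_j$ by the $p$-IMCF: the level sets $(\Omega_t)_t$ form an exhaustion, a Hölder-type inequality yields $\mathfrak{m}_p(t)\le\ma^{\sml{p}}_{\iso}(\Omega_t)$, and hence $\lim_{t\to\infty}\mathfrak{m}_p(t)\le\ma^{\sml{p}}_{\iso}$ directly from the definition \cref{eq:isopmass}. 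Geroch monotonicity then closes the chain
\[
\ma_{\iso}(E_j) - o(1) \;\le\; \mathfrak{m}_p(\Sigma_j) \;\le\; \lim_{t\to\infty}\mathfrak{m}_p(t) \;\le\; \ma^{\sml{p}}_{\iso},
\]
and letting $j\to\infty$ and then $\varepsilon\to0$ gives $\ma_{\iso}\le\ma^{\sml{p}}_{\iso}$.

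The hard part will be the Geroch-type monotonicity of $\mathfrak{m}_p$ for the weak, possibly non-smooth $p$-IMCF: one must justify the sign of the derivative across the jump set $\set{\abs{\nabla w_p}=0}$, which is precisely where the hypotheses that $\partial M$ is minimal and that $M$ contains no other compact minimal surface are used to keep the flow from stalling, while nonnegative scalar curvature supplies the sign. A secondary difficulty is the initial comparison $\mathfrak{m}_p(\Sigma_j)\ge\ma_{\iso}(E_j)-o(1)$, since the $\Sigma_j$ need be neither smooth nor exactly CMC; I expect to run the estimate on near-minimizers and to use \cref{eq:isoperimetric_inequality} to bound the deficit uniformly. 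Finally, bounding $\lim_t\mathfrak{m}_p(t)$ by $\ma^{\sml{p}}_{\iso}$ \emph{intrinsically}, through the exhaustion $(\Omega_t)_t$ in \cref{eq:isopmass} rather than through the (here unavailable) ADM mass, is the point where the present non-asymptotically-flat setting genuinely departs from \cite{jauregui_lower_2019}.
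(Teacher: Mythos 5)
There is a genuine gap, and it sits exactly where you flag it: the entire proposal is suspended from a Geroch-type monotone quantity $\mathfrak{m}_p(t)$ along the weak $p$-IMCF \cref{eq:pIMCF}, and no such quantity is known. Deferring it as ``the hard part'' is not a reduction of the problem — it \emph{is} the problem: for weak solutions of \cref{eq:pIMCF} even the jump analysis that Huisken--Ilmanen carry out for $p=1$ has no available $p$-analogue, and the known nonlinear monotonicity formulas of \cite{benatti_nonlinear_2023} hold along $p$-capacitary potentials under different curvature hypotheses and do not produce a mass with the two endpoints you need. The paper avoids this entirely: it never flows by the $p$-IMCF in this half of the theorem. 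It runs the ordinary weak IMCF \cref{eq:IMCF} (available by \cite{benatti_growth_2025} thanks to \cref{eq:isoperimetric_inequality}), where the classical Geroch monotonicity of the Hawking mass applies \cite{huisken_inverse_2001}, and it transfers the information to the $p$-capacity \emph{statically}, through the Bray--Miao--Xiao estimate (\cref{prop:xiao}) combined with \cref{lem:contwillmore} and the hypergeometric expansion \cref{eq:firstorderG}, which together yield $\ma^{\sml{p}}_{\iso}(\Omega_t)=\bigl[\tfrac1p\ma_{\iso}(\Omega_t)+\tfrac{p-1}{p}\ma_{H}(\partial \Omega_t)\bigr](1+o(1))$ along the $1$-flow. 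All the $p$-dependence is absorbed into a capacity--Willmore inequality for a fixed surface, so no new monotonicity formula is needed.

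Your lower endpoint also uses the Jauregui--Lee input in the wrong direction. Theorem 17 of \cite{jauregui_lower_2019} (here \cref{prop:jaureguilee}, refined in \cref{eq:jaureguirefined}) converts a \emph{uniform upper bound} on Hawking masses of coexterior domains into the upper bound $\ma_{\iso}\le\ma$; it does not supply the initial-slice inequality $\mathfrak{m}_p(\Sigma_j)\ge \ma_{\iso}(E_j)-o(1)$ for near-isoperimetric surfaces. An inequality of that shape at a single surface is false in general, and in the asymptotically flat literature its surrogates require the full isoperimetric-surface machinery of \cite{chodosh_isoperimetry_2021}, which is unavailable here: under the bare inequality \cref{eq:isoperimetric_inequality} one does not even know that isoperimetric regions exist for large volumes, let alone that near-minimizers are almost-CMC with quantitative control. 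The paper's chain runs the other way: assuming $\ma^{\sml{p}}_{\iso}<+\infty$, it shows $\sup_t\ma_H(\partial\Omega_t)<+\infty$ (otherwise \cref{prop:xiao} and the asymptotic comparison lemma of \cite{benatti_isoperimetric_2025} force a contradiction), deduces from the displayed expansion that $\ma_H(\partial\Omega)\le\ma^{\sml{p}}_{\iso}$ for \emph{every} coexterior domain with connected boundary, and only then invokes \cref{prop:jaureguilee} to conclude $\ma_{\iso}\le\ma^{\sml{p}}_{\iso}$. (Your remaining endpoint, $\lim_t\mathfrak{m}_p(t)\le\ma^{\sml{p}}_{\iso}$ via a H\"older bound in the spirit of \cref{eq:area-pcapinequality}, is plausible but cannot be assessed without a definition of $\mathfrak{m}_p$.) Unless you can actually construct $\mathfrak{m}_p$ and prove its monotonicity across the jump set of the weak flow, the proposal does not close.
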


The proof of \cref{prop:firstinequality} crucially builds on \cite[Theorem 17]{jauregui_lower_2019}. As already noted in \cite{benatti_isoperimetric_2025}, many of the assumptions in that theorem are included to ensure that the argument leading to \cite[Main Theorem]{huisken_inverse_2001} can be carried out. If one removes the ADM mass from the picture, \cite[Theorem 17]{jauregui_lower_2019} effectively requires an upper bound on the \textit{Hawking mass} for a sufficiently large class of surfaces. The Hawking mass of a closed $\CS^2$-surface $\Sigma$ is defined as
\begin{equation}
    \ma_{H}(\Sigma) \coloneqq\sqrt{\frac{\abs{\Sigma}}{16 \pi}}\left(1- \int_{\Sigma} \frac{\H^2}{16\pi} \dif \sigma\right),
\end{equation}
where $\H$ is the mean curvature of $\Sigma$. The following statement is essentially \cite{jauregui_lower_2019}, tailored for our purposes.

\begin{theorem}[\cite{jauregui_lower_2019}]\label{prop:jaureguilee}
    Let $(M,g)$ be a Riemannian $3$-manifold. Suppose that $M$ possibly has a smooth compact minimal boundary and no other compact minimal surface is contained in $M$. Assume that \cref{eq:isoperimetric_inequality} holds. If $\ma_H(\partial \Omega) \leq \ma$ for every coexterior domain $\Omega$ with connected boundary, then $\ma_{\iso} \leq \ma$.
\end{theorem}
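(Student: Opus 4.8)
The plan is to adapt the proof of \cite[Theorem 17]{jauregui_lower_2019}, isolating the two places where asymptotic flatness enters there and replacing them by the hypotheses at hand. First I would reduce to outward‑minimizing competitors. Writing the isoperimetric mass of a region as $\ma_{\iso}(\Omega)=2\abs{\Omega}/\abs{\partial\Omega}-\abs{\partial\Omega}^{1/2}/(3\sqrt{\pi})$, one sees that passing from $\Omega$ to its strictly outward minimizing hull $\Omega^{*}$ moves both terms favorably, since $\abs{\partial\Omega^{*}}\le\abs{\partial\Omega}$ while $\abs{\Omega^{*}}\ge\abs{\Omega}$; hence $\ma_{\iso}(\Omega)\le\ma_{\iso}(\Omega^{*})$ with no sign assumption. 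The isoperimetric inequality \eqref{eq:isoperimetric_inequality} guarantees that these hulls are bounded, while the hypotheses that $\partial M$ is minimal, that $M$ contains no other compact minimal surface, and that $M$ is one‑ended force the outer boundary to be a single closed hypersurface once the region is large. Thus it suffices to bound $\limsup_{j}\ma_{\iso}(\Omega_{j})$ over exhaustions by coexterior domains with connected boundary, which is precisely the class on which the Hawking bound is assumed.

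The core is a monotonicity argument along weak inverse mean curvature flow, whose proper existence is furnished by \eqref{eq:isoperimetric_inequality} via the strong nonparabolicity of \cite{benatti_growth_2025} (recovering Huisken--Ilmanen's flow in the limit). Its leaves $\Sigma_{t}$ are outward‑minimizing and bound coexterior domains $\Omega_{t}$ with connected boundary, so the standing hypothesis gives $\ma_{H}(\Sigma_{t})\le\ma$ throughout. Setting $A(t)=\abs{\Sigma_{t}}=A_{0}e^{t}$, $V(t)=\abs{\Omega_{t}}$, $m(t)=\ma_{\iso}(\Omega_{t})$, the evolution equations $A'=A$ and $V'=\int_{\Sigma_{t}}H^{-1}$ yield
\begin{equation*}
    m'(t)+m(t)=\frac{2}{A(t)}\left(\int_{\Sigma_{t}}\frac{1}{H}-\frac{A(t)^{3/2}}{4\sqrt{\pi}}\right).
\end{equation*}
Nonnegative scalar curvature makes the Hawking mass nondecreasing (Geroch monotonicity, where connectivity of $\Sigma_{t}$ is used through the Gauss--Bonnet term), so $\ma_{H}(\Sigma_{t})\nearrow\mu_{\infty}\le\ma$ and the Geroch defect is integrable in $t$. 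Integrability forces the leaves to become asymptotically umbilic with $H$ asymptotically constant along a sequence $t_{k}\to\infty$, which upgrades the one‑sided Cauchy--Schwarz bound $\int H^{-1}\ge A^{3/2}/(\int H^{2})^{1/2}$ to a two‑sided estimate; combined with $\int_{\Sigma_{t}}H^{2}=16\pi(1-\ma_{H}(\Sigma_{t})\sqrt{16\pi/A})$ this gives $m'(t)+m(t)\le\mu_{\infty}+o(1)$, and integrating the differential inequality produces $\limsup_{t}\ma_{\iso}(\Omega_{t})\le\mu_{\infty}\le\ma$.

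Finally I would transfer this bound to an arbitrary exhaustion: since the hull reduction only raises $\ma_{\iso}$, and every sufficiently large outward‑minimizing region with connected boundary is controlled by the flow estimate above (with the uniformity coming from the fixed constant $\kst_{I}$ in \eqref{eq:isoperimetric_inequality}), one obtains $\limsup_{j}\ma_{\iso}(\Omega_{j})\le\ma$ for every exhaustion, hence $\ma_{\iso}\le\ma$. I expect the main obstacle to be the middle step: converting the mere integrability of the Geroch defect into the \emph{quantitative} upper control of $\int_{\Sigma_{t}}H^{-1}$ required for the two‑sided estimate, and doing so uniformly enough to cover arbitrary exhausting sequences rather than a single flow line. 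This is exactly the technical heart inherited from \cite{jauregui_lower_2019} (and ultimately from Huisken--Ilmanen), and it is where connectivity of the boundary is indispensable.
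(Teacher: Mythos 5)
Two preliminary remarks. First, the paper contains no proof of \cref{prop:jaureguilee}: it is imported, with cosmetic tailoring, from \cite[Theorem 17]{jauregui_lower_2019}, so your attempt can only be measured against that argument and against its own internal coherence. On the latter count there are genuine gaps. The first is that you invoke nonnegative scalar curvature through Geroch monotonicity -- both to propagate the Hawking bound along the flow and to extract an ``integrable Geroch defect'' -- but \cref{prop:jaureguilee} does not assume $R\ge 0$. The hypothesis that $\ma_H(\partial\Omega)\le\ma$ for \emph{every} coexterior domain with connected boundary is precisely the substitute for monotonicity: the bound $\ma_H(\partial\Omega_t)\le\ma$ along the flow is free, because the sublevel sets of the weak IMCF are themselves coexterior domains with connected boundary. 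In the paper's architecture, $R\ge 0$ and Geroch enter only afterwards, in \cref{eq:jaureguirefined} and \cref{prop:firstinequality}, to \emph{verify} the hypothesis of \cref{prop:jaureguilee}; folding Geroch into the proof of \cref{prop:jaureguilee} itself proves a strictly weaker statement and collapses the intended division of labor, and everything you derive from monotonicity beyond the bare bound (umbilicity, $\H$ asymptotically constant) is simply unavailable under the stated hypotheses.

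The fatal gap, however, is directional, and it sits exactly at the step you flag as the ``technical heart.'' Your ODE identity $m'(t)+m(t)=\frac{2}{A(t)}\bigl(\int_{\Sigma_t}\H^{-1}-\frac{A(t)^{3/2}}{4\sqrt{\pi}}\bigr)$ is correct, but Cauchy--Schwarz yields only the lower bound $\int_{\Sigma_t}\H^{-1}\ge \abs{\Sigma_t}^{3/2}\bigl(\int_{\Sigma_t}\H^2\bigr)^{-1/2}$, and since $\int_{\Sigma_t}\H^2$ is determined by $\ma_H(\Sigma_t)$ and $\abs{\Sigma_t}$, this gives $m'(t)+m(t)\ge \ma_H(\Sigma_t)$ -- the comparison that the paper exploits in the \emph{opposite} direction in the proof of \cref{prop:firstinequality} via \cite[Lemma 2.8]{benatti_isoperimetric_2025}. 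An upper bound on $\int\H^{-1}$ cannot be extracted from integral information on $\H$: closeness of $\H$ to a constant in $L^2$ (let alone integrability of the Geroch defect, which only controls a subsequence and only in integrated form) is compatible with $\H$ degenerating on sets of small measure, where $\int\H^{-1}$ blows up, and the weak flow provides no pointwise positive lower bound on $\H$; note also that $A(t)=A_0\ee^{t}$ and $V'(t)=\int\H^{-1}$ require the outward-minimizing property and fail at jump times. Worse, even granting $m'+m\le\mu_\infty+o(1)$, integrating $\frac{\dif}{\dif t}(\ee^{t}m)$ bounds $m(T)$ in terms of $m(0)$, never conversely: it controls the flowed leaves, not the initial region, whereas an arbitrary large hull $\Omega_j^{*}$ is the $t=0$ slice of its own flow. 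Since $\ma_{\iso}$ is a supremum over \emph{all} exhaustions, bounding $\limsup_t\ma_{\iso}(\Omega_t)$ along single flow lines proves nothing; what is needed, and what \cite[Theorem 17]{jauregui_lower_2019} actually delivers, is the initial-time sharp isoperimetric inequality $\abs{\Omega}\le\abs{\partial\Omega}^{3/2}/(6\sqrt{\pi})+\frac{\ma}{2}\abs{\partial\Omega}+o(\abs{\partial\Omega})$ for every sufficiently large coexterior hull (compare \cref{lem:sharpisoperimetric}, where this inequality is the operative form). Your sketch never produces this bound, so the closing ``transfer to arbitrary exhaustions'' is assertion, not proof. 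The scaffolding, by contrast, is sound: the hull reduction $\ma_{\iso}(\Omega)\le\ma_{\iso}(\Omega^{*})$, the boundedness of hulls under \cref{eq:isoperimetric_inequality}, and the connectedness of large hull boundaries via the confinement of minimal surfaces all match the paper's footnoted reduction to \cite[Lemma 2.11 and Remark 2.12]{benatti_isoperimetric_2025}.
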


\begin{proof}
    Take an exhaustion $\set{\Omega_j}_{j \in \N}$ of $M$. If the isoperimetric ratio $i_j \coloneqq \abs{\partial \Omega_j}^{3/2}/\abs{\Omega_j}$ diverges to $+\infty$, then $\ma_{\iso}(\Omega_j) \to -\infty$ as $j\to +\infty$. Therefore, either $\ma_{\iso}=-\infty$ or we can assume that $i_j$ is bounded.
    
    Observe that all the assumptions required in the proof of \cite[Theorem 17]{jauregui_lower_2019} are now met with $\ma$ in place of the ADM mass. In particular, nonnegative scalar curvature and asymptotic flatness are required only to ensure the upper bound for the Hawking mass in the class of coexterior domains. The isoperimetric constant is bounded by \cref{eq:isoperimetric_inequality}, which also ensures that $\abs{\partial \Omega_j} \geq 36 \pi \ma^2$ for $j$ large enough. Hence,
    \begin{equation}
        \ma_{\iso}(\Omega_j) \leq \ma + \frac{\kst }{\sqrt{\abs{\partial \Omega_j}}},
    \end{equation}
    which implies $\ma_{\iso}\leq \ma$.
\end{proof}

In our setting, it suffices to control the Hawking mass on a narrower family of domains. By \cite[Theorem 1.2]{benatti_proper_2025}, spaces with positive isoperimetric constants are \textit{strongly $1$-nonparabolic}, meaning that every coexterior domain $\Omega\supset \partial M$ admits a weak inverse mean curvature flow (IMCF) $w_1: M \smallsetminus \Omega \to \R$, i.e., a locally Lipschitz solution to the following problem
\begin{equation}\label{eq:IMCF}
    \begin{cases}
        \displaystyle\div\left(\frac{\nabla w_1}{\abs{\nabla w_1}}\right) &=& \abs{\nabla w_1} & \text{on }M \smallsetminus \Omega,\\
        w_1&=& 0 &\text{on } \partial \Omega,\\
        w_1(x) &\to& + \infty & \text{as }\mathrm{dist}(x,\partial \Omega) \to + \infty.
    \end{cases}
\end{equation}
The differential equation in \cref{eq:IMCF} is understood in the sense of Huisken--Ilmanen \cite{huisken_inverse_2001}. 

In manifolds with nonnegative scalar curvature, the map $t \mapsto \ma_H(\partial \set{w_1 \le t})$ is monotone nondecreasing, provided that level sets $\partial \set{w_1 \le t}$ remain connected (see \cite[Geroch Monotonicity Formula 5.8]{huisken_inverse_2001} or \cite[Theorem 5.5]{benatti_fine_2024}). The condition on minimal surfaces guarantees this topological property if the initial data $\Omega \supseteq \partial M$ has connected boundary\footnote{\cite[Remark 2.12]{benatti_isoperimetric_2025} ensures that \cite[Lemma~2.11]{benatti_isoperimetric_2025} applies. In the second item of that lemma, the assumption that the exhaustion consists of spheres can be removed, as one can verify by inspecting the proof. The lemma also states that \cref{prop:firstinequality} -- and \cref{thm:maintheorem} -- also apply when minimal surfaces are confined in a compact set.}. In these cases, an upper bound for the Hawking mass can be derived by analyzing its asymptotic behavior along the level sets of solutions to \cref{eq:IMCF}. This observation often simplifies the task of ensuring the hypothesis $\ma_{H}(\partial \Omega)\leq \ma$ in \cref{prop:jaureguilee}, since the geometry at infinity is typically better controlled than the local one. Moreover, the monotonicity can be used to further restrict the class of coexterior domains $\Omega$ to those that entirely contain the boundary $\partial M$. In the setting of \cref{thm:maintheorem}, we can refine \cref{prop:jaureguilee} as follows.

\begin{lemma}\label{lem:jaureguirefined}
Under the hypotheses of \cref{thm:maintheorem}, denote $\Omega_t \coloneqq \set{w_1 \le t}$, where $w_1$ is the solution of \cref{eq:IMCF} with initial data $\Omega$. If $\lim\limits_{t \to +\infty} \ma_H(\partial \Omega_t) \le \ma $ for every coexterior domain $\Omega\supseteq \partial M$ with connected boundary, then $\ma_{\iso} \le \ma$.
\end{lemma}

\begin{proof}
    We only need to prove that $\ma_{H}(\partial \Omega) \leq \ma$ holds for every coexterior domain with connected boundary. We have already discussed how monotonicity formulas imply $\ma_{H}(\partial \Omega) \leq \ma_{H}(\partial \Omega_t) \leq \ma$ whenever $\Omega \supset \partial M$. 
    
    On the contrary, assume that at least one connected component of $\partial M$ is disjoint from $\Omega$. By \cite[Geroch Monotonicity Formula 6.1]{huisken_inverse_2001} (see also \cite[Proposition 2.6]{benatti_isoperimetric_2025}), one can build a coexterior domain $\Omega'\supset \partial M$ with connected boundary such that $\ma_{H}(\partial \Omega) \leq \ma_{H}(\partial \Omega')$. Therefore, the latter case reduces to the previous one.
\end{proof}

The lack of regularity of the IMCF is one of the main difficulties in analyzing the asymptotic behavior of quantities computed on its level sets. If the background metric itself lacks asymptotic behavior, the issue becomes even more severe. The following result -- extracted from the asymptotic comparison lemma \cite[Lemma 2.8]{benatti_isoperimetric_2025} --  allows control of the $L^2$-norm of the mean curvature on large level sets under the sole assumption that the Hawking mass remains bounded.

\begin{lemma}\label{lem:contwillmore}
    Under the hypotheses of \cref{prop:firstinequality}, let $\Omega \supseteq \partial M$ be a coexterior domain with connected boundary. Denote $\Omega_t \coloneqq \set{w_1 \le t}$, where $w_1$ is the solution of \cref{eq:IMCF} with initial data $\Omega$. If $\sup_t\ma_{H}(\partial \Omega_t)<+\infty$, then
    \begin{equation}\label{eq:contwillmore}
        \lim_{t \to +\infty} \int_{\partial \Omega_t} \H^2 \dif \sigma = 16 \pi.
    \end{equation}
\end{lemma}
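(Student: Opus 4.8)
The plan is to read off the limit of the Willmore energy directly from the definition of the Hawking mass, using two ingredients about the weak flow: the exponential growth of the area and the Geroch monotonicity. Write $A(t) \coloneqq \abs{\partial \Omega_t}$ and $W(t)\coloneqq \int_{\partial\Omega_t}\H^2\dif\sigma$, where $\H = \abs{\nabla w_1}$ is the weak mean curvature of the level set. The definition of the Hawking mass can then be inverted as
\begin{equation*}
    1 - \frac{W(t)}{16\pi} = \ma_H(\partial\Omega_t)\left(\frac{16\pi}{A(t)}\right)^{1/2},
\end{equation*}
so the whole argument reduces to controlling the right-hand side as $t \to +\infty$.

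First I would record that $A(t) = \abs{\partial\Omega}\,\ee^{t} \to +\infty$, the exponential area growth of the weak IMCF established in \cite{huisken_inverse_2001}, which holds across jumps and requires no asymptotic hypothesis (global existence and properness being guaranteed by strong $1$-nonparabolicity, itself a consequence of \cref{eq:isoperimetric_inequality}). Next, under the standing assumption of nonnegative scalar curvature together with the connectedness of the level sets $\partial\Omega_t$ -- guaranteed in our setting by the confinement of minimal surfaces, as discussed before \cref{lem:contwillmore} -- the Geroch monotonicity formula makes $t \mapsto \ma_H(\partial\Omega_t)$ nondecreasing. In particular $\ma_H(\partial\Omega_t) \ge \ma_H(\partial\Omega)$ for every $t \ge 0$, which provides a uniform lower bound, while the hypothesis $\sup_t \ma_H(\partial\Omega_t) < +\infty$ provides the matching upper bound.

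With both bounds in hand, I would sandwich the displayed right-hand side: dividing the finite constants $\ma_H(\partial\Omega)$ and $\sup_s \ma_H(\partial\Omega_s)$ by $(A(t)/16\pi)^{1/2} \to +\infty$ forces $1 - W(t)/16\pi \to 0$. Concretely, the upper bound yields $\limsup_t (1 - W(t)/16\pi) \le 0$, hence $\liminf_t W(t) \ge 16\pi$, while the lower bound yields $\liminf_t(1 - W(t)/16\pi) \ge 0$, hence $\limsup_t W(t) \le 16\pi$; combining the two gives $W(t)\to 16\pi$, which is \cref{eq:contwillmore}.

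The main obstacle is not this final limit but the justification of the two ingredients for a merely $\CS^{1,\alpha}$, possibly jumping, weak flow in the absence of any asymptotic control on the metric. One must interpret $\H$ as the weak mean curvature, verify that the area grows exponentially even across the outward jumps to strictly minimizing hulls (where the newly exposed pieces are minimal and contribute no Willmore energy, so the identity and monotonicity persist), and confirm that the Hawking-mass identity holds in this weak sense. This is precisely the content extracted from the asymptotic comparison lemma \cite[Lemma 2.8]{benatti_isoperimetric_2025}, on which I would rely to keep all quantities controlled purely in terms of the isoperimetric inequality \cref{eq:isoperimetric_inequality} and the bounded-Hawking-mass hypothesis.
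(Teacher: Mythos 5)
Your proof is correct and takes essentially the same route as the paper: both arguments combine Geroch monotonicity (supplying the finite lower bound $\ma_H(\partial \Omega_t) \geq \ma_H(\partial \Omega)$), the hypothesis $\sup_t \ma_H(\partial\Omega_t)<+\infty$ as the upper bound, and the exponential area growth $\abs{\partial\Omega_t}\to+\infty$, your direct two-sided squeeze being merely a streamlined version of the paper's two-case contradiction argument. The only cosmetic slip is $A(t)=\abs{\partial\Omega}\,\ee^{t}$, which should read $A(t)=\ee^{t}\abs{\partial\Omega^{*}}$ with $\Omega^{*}$ the strictly outward minimizing hull (as in the paper's own display), but this is immaterial since only the divergence of $A(t)$ is used.
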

\begin{proof}
  By the monotonicity of the Hawking mass, either $\ma_H(\partial \Omega_t)<0$ for all $t$ or there exists $T\geq0$ such that $\ma_H(\partial \Omega_t) \geq 0 $ for all $t\geq T$. In the first case, we would have
  \begin{equation}
      \int_{\partial \Omega_t} \H^2  \dif \sigma > 16\pi
  \end{equation}
  for all $t>0$. Assume by contradiction that the lemma is false, then there exists a sequence $(t_n)_{n \in \N}$ increasing to $+ \infty$ as $n\to + \infty$ such that
    \begin{equation}
        \int_{\partial \Omega_t} \H^2 \dif \sigma \geq 16\pi + 16 \pi \varepsilon,
    \end{equation}
    for some $\varepsilon>0$.
    Hence,
    \begin{equation}
        \ma_H(\partial \Omega) \leq \ma_H(\partial \Omega_{t_n}) \leq - \varepsilon \ee^{\frac{t_n}{2}} \sqrt{\frac{\abs{\partial \Omega^*}}{16 \pi}}
    \end{equation}
    which contradicts $\H \in L^2(\partial \Omega)$. Thus, \cref{eq:contwillmore} holds if $\ma_{H}(\partial \Omega_t)<0$ for all $t$.
    \smallskip

    Assume now that there exists $T\geq0$ such that $\ma_{H}(\partial \Omega_t)\geq 0$ for all $t \geq T$. Arguing by contradiction, pick a sequence $(t_n)_{n\in\N}$ increasing to $+\infty$ as $n \to +\infty$ and such that 
    \begin{equation}
        \int_{\partial \Omega_{t_n}} \H^2 \dif \sigma \leq 16\pi - 16 \pi \varepsilon,
    \end{equation}
    for some positive $\varepsilon\leq 1 $. It follows that
    \begin{equation}
        \ma_{H}(\partial \Omega_{t_n}) \geq \varepsilon \sqrt{\frac{\abs{\partial \Omega_{t_n}}}{16\pi}},
    \end{equation}
    which contradicts $\sup\ma_{H}(\partial \Omega_{t_n}) <+\infty$. Hence, \cref{eq:contwillmore} holds also in this case, concluding the proof of the lemma.
\end{proof}

The last ingredient is the following theorem proved for $p=2$ by H. Bray and P. Miao \cite{bray_capacity_2008} and then extended for any $p \in (1,3)$ by J. Xiao \cite{xiao_p-harmonic_2016}. We remark that asymptotic flatness in \cite{xiao_p-harmonic_2016,bray_capacity_2008} is assumed only to ensure that $(M,g)$ is strongly $1$-nonparabolic.

\begin{theorem}[\cite{bray_capacity_2008},\cite{xiao_p-harmonic_2016}]\label{prop:xiao}
Let $(M,g)$ be a Riemannian $3$-manifold with nonnegative scalar curvature. Suppose that $M$ possibly has a smooth compact minimal boundary and no other compact minimal surface is contained in $M$. Assume that \cref{eq:isoperimetric_inequality} holds. Then, 
    \begin{equation}
        \ncapa_p(\partial \Omega)  \leq \left( \frac{\abs{\partial \Omega}}{4\pi}\right)^{\frac{3-p}{2}} \leftidx{_2}{F}{_1}\left( \frac{1}{2}, \frac{3-p}{p-1},\frac{2}{p-1}; 1- \frac{1}{16 \pi}\int_{\partial \Omega} \H^2 \dif \sigma \right)^{-(p-1)}
    \end{equation}
holds for every coexterior domain $\Omega\supseteq \partial M$ with connected boundary, where $ \leftidx{_2}{F}{_1}$ is the hypergeometric function.
\end{theorem}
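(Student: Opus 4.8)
The plan is to estimate $\ncapa_p(\partial\Omega)$ from above by feeding carefully chosen competitors into the variational definition \cref{eq:pcapacity}, building those competitors out of the weak inverse mean curvature flow rather than out of the $p$-capacitary potential itself. This is exactly why strong $1$-nonparabolicity — guaranteed by \cref{eq:isoperimetric_inequality} — rather than asymptotic flatness is the only structural input needed. Let $w_1\colon M\smallsetminus\Omega\to\R$ be the proper weak IMCF of \cref{eq:IMCF} issuing from $\partial\Omega$ (which exists, and whose level sets stay connected, under the present hypotheses for $\Omega\supseteq\partial M$ with connected boundary), put $\Sigma_t\coloneqq\partial\set{w_1\le t}$ and $A\coloneqq\abs{\partial\Omega}$, and fix a nonincreasing profile $f\colon[0,+\infty)\to[0,1]$ with $f(0)=1$ and $f(+\infty)=0$. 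Then $\varphi\coloneqq f(w_1)$, extended by $1$ on $\Omega$, is admissible in \cref{eq:pcapacity}, and by the coarea formula together with the weak flow identity $\abs{\nabla w_1}=\H$ on the level sets the $p$-energy collapses to a one-dimensional integral,
\[
\int_{M\smallsetminus\Omega}\abs{\nabla\varphi}^p\dif\mu=\int_0^{+\infty}\abs{f'(t)}^p\left(\int_{\Sigma_t}\H^{p-1}\dif\sigma\right)\dif t .
\]

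Next I would bound the slice integral $\int_{\Sigma_t}\H^{p-1}$ by the two quantities appearing in the statement. Since $1<p<3$, Hölder's inequality with conjugate exponents $2/(p-1)$ and $2/(3-p)$ gives $\int_{\Sigma_t}\H^{p-1}\le\abs{\Sigma_t}^{(3-p)/2}\big(\int_{\Sigma_t}\H^2\dif\sigma\big)^{(p-1)/2}$. Two properties of the flow then enter: the exponential area law $\abs{\Sigma_t}=A\,\ee^{t}$ and, crucially, the Geroch monotonicity $\ma_H(\Sigma_t)\ge\ma_H(\partial\Omega)$, which holds because the scalar curvature is nonnegative and the level sets $\Sigma_t$ remain connected. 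Writing $\beta\coloneqq 1-\tfrac1{16\pi}\int_{\partial\Omega}\H^2\dif\sigma$, monotonicity rearranges to $\int_{\Sigma_t}\H^2\le 16\pi\big(1-\beta\,\ee^{-t/2}\big)$, and substituting both facts produces an explicit weight $\rho(t)$ with $\int_{\Sigma_t}\H^{p-1}\le\rho(t)$.

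It then remains to choose the optimal profile. The functional $f\mapsto\int_0^{+\infty}\abs{f'}^p\rho\dif t$ is a one-dimensional $p$-Dirichlet energy whose Euler–Lagrange equation forces $\abs{f'}^{p-1}\rho$ to be constant, and whose minimum equals $I^{1-p}$ with $I\coloneqq\int_0^{+\infty}\rho^{-1/(p-1)}\dif t$. The substitution $s=\ee^{-t/2}$ recasts $I$, up to explicit powers of $A$ and $4\pi$, as $\int_0^1 s^{\frac{3-p}{p-1}-1}(1-\beta s)^{-1/2}\dif s$, which is precisely the Euler integral of $\leftidx{_2}{F}{_1}\big(\tfrac12,\tfrac{3-p}{p-1};\tfrac{2}{p-1};\beta\big)$ — the two leftover Gamma factors collapse because here $c-b=1$. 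Carrying the normalization of \cref{eq:pcapacity} through the algebra collapses every constant and yields exactly
\[
\ncapa_p(\partial\Omega)\le\left(\frac{\abs{\partial\Omega}}{4\pi}\right)^{\frac{3-p}{2}}\leftidx{_2}{F}{_1}\left(\frac12,\frac{3-p}{p-1},\frac{2}{p-1};\beta\right)^{-(p-1)} .
\]

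The main obstacle is not this algebra but making the two flow identities rigorous for the merely Lipschitz $w_1$: the equality $\abs{\nabla w_1}=\H$, the area law, and Geroch monotonicity must all be read in the Huisken–Ilmanen weak sense, with due care on the critical and jump sets where $\nabla w_1$ vanishes. In particular, if $\partial\Omega$ fails to be outward minimizing the flow jumps at $t=0$ to the strictly outward minimizing hull $\Omega^*$; one then obtains the estimate with $\abs{\partial\Omega^*}$ and $\sqrt{A/\abs{\partial\Omega^*}}\,\beta$ in place of $A$ and $\beta$, and since $\leftidx{_2}{F}{_1}$ is increasing in its argument this only tightens the right-hand side, so the stated inequality with $\abs{\partial\Omega}$ persists. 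Finally, the sharp profile is neither smooth nor compactly supported, so I would conclude by a routine truncation-and-mollification argument to remain inside the admissible class of \cref{eq:pcapacity}.
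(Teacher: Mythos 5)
The paper does not actually prove \cref{prop:xiao}: it imports the statement from Bray--Miao ($p=2$) and Xiao ($p\in(1,3)$), adding only the remark that asymptotic flatness in those references serves merely to guarantee strong $1$-nonparabolicity, which here follows from \cref{eq:isoperimetric_inequality}. Your argument is a faithful reconstruction of the proof in those sources: test functions $f(w_1)$ built from the weak IMCF, coarea with $\abs{\nabla w_1}=\H$ on the slices, H\"older with exponents $\nicefrac{2}{(p-1)}$ and $\nicefrac{2}{(3-p)}$, the exponential area law, Geroch monotonicity, and the one-dimensional optimization whose value is the Euler integral of the hypergeometric function. Your constant-chasing is correct: with $c-b=1$ the prefactor $\Gamma(c)/\Gamma(b)=(3-p)/(p-1)$ indeed cancels against the normalization in \cref{eq:pcapacity}, and the convergence of $\leftidx{_2}{F}{_1}$ at argument $1$ is guaranteed since $c-a-b=\nicefrac{1}{2}>0$.

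One step, however, is wrong as stated: your treatment of the initial jump. Claiming the estimate with $\sqrt{A/\abs{\partial\Omega^*}}\,\beta$ in place of $\beta$ amounts to asserting $\ma_{H}(\Sigma_t)\ge\ma_{H}(\partial\Omega)$ across the jump, i.e. $\ma_{H}(\partial\Omega^*)\ge\ma_{H}(\partial\Omega)$. This fails in general when $\beta>0$: passing to the strictly outward minimizing hull replaces part of $\partial\Omega$ by surfaces with $\H=0$ and leaves $\H$ unchanged elsewhere, so $\int\H^2\dif\sigma$ does not increase, but the area strictly drops; if the replaced portion already had small $\H$, the Willmore gain is negligible while the area loss is not, and the Hawking mass strictly decreases. (For $\beta\le0$ your claim is fine, since then shrinking the area makes the mass less negative.) The fix is standard and costs nothing. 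Either use $\ma_{H}(\Sigma_t)\ge\ma_{H}(\partial\Omega^*)\ge\sqrt{\abs{\partial\Omega^*}/16\pi}\,\beta$, valid because $\beta^*\coloneqq 1-\frac{1}{16\pi}\int_{\partial\Omega^*}\H^2\dif\sigma\ge\beta$; combined with the area law $\abs{\Sigma_t}=\abs{\partial\Omega^*}\ee^{t}$ the factor $\sqrt{\abs{\partial\Omega^*}}$ cancels and one recovers exactly $\int_{\Sigma_t}\H^2\dif\sigma\le16\pi\bigl(1-\beta\ee^{-t/2}\bigr)$ with the \emph{same} $\beta$, hence the final estimate with $\abs{\partial\Omega^*}\le\abs{\partial\Omega}$ in the area slot, which only strengthens the stated bound. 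Or, even more simply, use monotonicity of capacity under inclusion, $\ncapa_p(\partial\Omega)\le\ncapa_p(\partial\Omega^*)$, apply the outward-minimizing case to $\Omega^*$, and conclude via $\abs{\partial\Omega^*}\le\abs{\partial\Omega}$, $\beta^*\ge\beta$, and the monotonicity of $\leftidx{_2}{F}{_1}$ in its argument. With either correction your proof is complete.
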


The hypergeometric function in the theorem satisfies the following identity
\begin{equation}
     \leftidx{_2}{F}{_1}\left( \frac{1}{2}, \frac{3-p}{p-1},\frac{2}{p-1}; 1-t\right) = \frac{3-p}{p-1}(1-t)^{-\frac{3-p}{p-1}}\int_t^1 \frac{(1-s)^{\frac{2}{p-1}-2}}{\sqrt{s}} \dif s, \qquad \text{for }t \in (-\infty, 1].
\end{equation}
All the properties used in the proof below follows follows from this identity.

\begin{proof}[Proof of \cref{prop:firstinequality}]

    Let $\Omega \supseteq \partial M$ be a coexterior domain with connected boundary. Let $\Omega_t\coloneqq \set{w_1 \leq t}$. Assume that $\ma^{\sml{p}}_{\iso} <+\infty$, otherwise the statement is trivial. This implies that $\sup\ma_{H}(\partial \Omega_t) <+\infty$. If it is not the case, we would have $\ma_{H}(\partial \Omega_t) \geq 0$ for all $t \geq T$, by monotonicity. By \cref{prop:xiao}, this implies
    \begin{equation}
        \ncapa_p(\partial \Omega_t) \leq \left( \frac{\abs{\partial \Omega_t}}{4 \pi }\right)^{\frac{3-p}{2}}
    \end{equation}
    for all $t \geq T$. In particular, by \cite[Lemma 2.8]{benatti_isoperimetric_2025} we would have
    \begin{equation}
        \liminf_{t \to +\infty} \ma^{\sml{p}}_{\iso}(\partial \Omega_t) \geq \liminf_{t \to +\infty} \frac{1}{p}\ma_{\iso}(\Omega_t) \geq \lim_{t \to +\infty} \frac{1}{p}\ma_{H}(\partial \Omega_t),
    \end{equation}
    that contradicts the hypothesis on $\ma^{\sml{p}}_{\iso}$.  
    
    By \cref{lem:contwillmore} we have that
     \begin{equation}
         1- \frac{1}{16\pi}\int_{\partial \Omega_t} \H^2 \dif \sigma = o(1)
     \end{equation}
     as $t \to +\infty$. Since
     \begin{equation}\label{eq:firstorderG}
         \leftidx{_2}{F}{_1}\left( \frac{1}{2}, \frac{3-p}{p-1},\frac{2}{p-1};1-s \right) = 1+ \frac{3-p}{4}(1-s) +O((1-s)^2).
     \end{equation}
     as $s\to 1^+$, by algebraic computations involving \cref{prop:xiao} one can get
     \begin{equation}
         \ma^{\sml{p}}_{\iso}(\Omega_t) =\left[\frac{1}{p}\ma_{\iso}(\Omega_t) +\frac{p-1}{p}\ma_{H}(\partial \Omega_t)\right](1+o(1)), \qquad \text{as } t \to +\infty.
     \end{equation}
     Hence, employing again \cite[Lemma 2.8]{benatti_isoperimetric_2025}
     \begin{equation}
        \ma^{\sml{p}}_{\iso}\geq \liminf_{t \to +\infty} \ma^{\sml{p}}_{\iso} (\Omega_t) \geq \lim_{t \to +\infty} \ma_H(\partial \Omega_t) \geq \ma_H(\partial \Omega),
     \end{equation}
     which concludes the proof.
\end{proof}

\section{Final remarks}

It is noteworthy that \cref{prop:secondinequality} holds in a broader setting. However, it is not clear at this stage whether it is the broadest setting possible. The technique adopted involves the existence of solutions to \cref{eq:pIMCF}. It may also be possible that existence follows as a consequence of the finiteness of the masses. The finiteness of $\ma^{\sml{p}}_{\iso}$ implies at least that $(M,g)$ is $p$-nonparabolic, i.e. there exists a set of positive $p$-capacity, which is known to be equivalent to the existence of a positive $p$-Green function. If this function vanishes at infinity, then the manifold would be strongly $p$-nonparabolic.

\begin{question}
    Does $\ma^{\sml{p}}_{\iso} <+\infty$ imply that $(M,g)$ is strongly $p$-nonparabolic?
\end{question}

Another question closely related to the previous one is the following.

\begin{question}
    Does $\ma^{\sml{p}}_{\iso}=+\infty$ for some $p\in[1,3)$ implies that $\ma^{\sml{p}}_{\iso}=+\infty$ for all $p\in[1,3)$?
\end{question}

On the one hand, there are examples of spaces that are $p$-parabolic -- thus $\ma^{\sml{p}}_{\iso} =+\infty$ -- for all $p>1$, but $1$-nonparabolic. A negative answer to the question seems to be more spontaneous. On the other hand, it is not known weather being $1$-nonparabolic, or even strongly $1$-nonparabolic, implies $\ma_{\iso}<+\infty$. 

\medskip

It is also noteworthy that \cref{prop:secondinequality} is based purely on asymptotic analysis, while \cref{prop:firstinequality} requires also local assumptions. This difference reflects the fact that the isoperimetric mass is determined by the geometry inside the domain, whereas isocapacitary masses, $p>1$, depend on what happens outside of it. However, when passing to the limit along an exhaustion, only the geometry at infinity should matter. This idea becomes even clearer in light of \cref{prop:equivalence1mas}: this result suggests that the $1$-capacity is the true actor on the stage, rather than the perimeter.
\begin{question}
    Is the consequence of \cref{prop:firstinequality} valid in strongly $1$-nonparabolic manifolds?
\end{question}

\begingroup
\setlength{\emergencystretch}{1em}
\printbibliography

@misc{benatti_proper_2025,
  author = {Benatti, Luca and Mari, Luciano and Rigoli, Marco and Setti, Alberto G. and Xu, Kai},
  year = {2025},
  month = dec,
  number = {arXiv:2512.14591},
  eprint = {2512.14591},
  primaryclass = {math},
  publisher = {arXiv},
  doi = {10.48550/arXiv.2512.14591},
  archiveprefix = {arXiv},
  title = {Proper Solutions of the {$1/H$}-Flow and the {{Green}} Kernel of the {$p$}-{{Laplacian}}}
}

@article{arnowitt_coordinate_1961,
  title = {Coordinate invariance and energy expressions in general relativity},
  author = {Arnowitt, R. and Deser, S. and Misner, C. W.},
  year = {1961},
  journal = {Physical Review. Series II},
  volume = {122},
  pages = {997--1006},
  issn = {0031-899X},
  doi = {10.1103/PhysRev.122.997}
}

@article{bartnik_mass_1986,
  title = {The mass of an asymptotically flat manifold},
  author = {Bartnik, Robert},
  year = {1986},
  journal = {Communications on Pure and Applied Mathematics},
  volume = {39},
  number = {5},
  pages = {661--693},
  issn = {0010-3640},
  doi = {10.1002/cpa.3160390505},
  fjournal = {Communications on Pure and Applied Mathematics}
}

@misc{benatti_fine_2024,
  title = {Fine properties of nonlinear potentials and a unified perspective on monotonicity formulas},
  author = {Benatti, Luca and Pluda, Alessandra and Pozzetta, Marco},
  year = {2024},
  month = nov,
  number = {arXiv:2411.06462},
  eprint = {2411.06462},
  publisher = {arXiv},
  doi = {10.48550/arXiv.2411.06462},
  archiveprefix = {arXiv}
}

@article{benatti_isoperimetric_2025,
  title = {On the isoperimetric {{Riemannian Penrose}} inequality},
  author = {Benatti, Luca and Fogagnolo, Mattia and Mazzieri, Lorenzo},
  year = {2025},
  month = may,
  journal = {Communications on Pure and Applied Mathematics},
  volume = {78},
  number = {5},
  pages = {1042--1085},
  issn = {1097-0312},
  doi = {10.1002/cpa.22239},
  copyright = {© 2024 The Author(s). Communications on Pure and Applied Mathematics published by Wiley Periodicals LLC.},
  langid = {english}
}

@article{benatti_nonlinear_2023,
  title = {Nonlinear isocapacitary concepts of mass in 3-manifolds with nonnegative scalar curvature},
  author = {Benatti, Luca and Fogagnolo, Mattia and Mazzieri, Lorenzo},
  year = {2023},
  journal = {SIGMA. Symmetry, Integrability and Geometry. Methods and Applications},
  shortjournal = {SIGMA},
  volume = {19},
  pages = {Paper No. 091, 29},
  issn = {1815-0659},
  doi = {10.3842/SIGMA.2023.091}
}

@article{bray_capacity_2008,
  title = {On the capacity of surfaces in manifolds with nonnegative scalar curvature},
  author = {Bray, Hubert and Miao, Pengzi},
  year = {2008},
  journal = {Inventiones Mathematicae},
  shortjournal = {Invent. Math.},
  volume = {172},
  number = {3},
  pages = {459--475},
  issn = {0020-9910},
  doi = {10.1007/s00222-007-0102-x},
  langid = {english}
}

@article{chodosh_isoperimetry_2021,
  author = {Chodosh, Otis and Eichmair, Michael and Shi, Yuguang and Yu, Haobin},
  year = {2021},
  month = feb,
  journal = {Communications on Pure and Applied Mathematics},
  volume = {74},
  number = {4},
  pages = {865--905},
  publisher = {Wiley},
  doi = {10.1002/cpa.21981},
  title = {Isoperimetry, {{Scalar Curvature}}, and {{Mass}} in {{Asymptotically Flat Riemannian}} {$3$}-{{Manifolds}}}
}

@incollection{chrusciel_boundary_1986,
  title = {Boundary conditions at spatial infinity from a {{Hamiltonian}} point of view},
  booktitle = {Topological properties and global structure of space-time ({{Erice}}, 1985)},
  author = {Chruściel, Piotr},
  year = {1986},
  series = {{{NATO Adv}}. {{Sci}}. {{Inst}}. {{Ser}}. {{B}}: {{Phys}}.},
  volume = {138},
  pages = {49--59},
  publisher = {Plenum, New York},
  url = {https://mathscinet.ams.org/mathscinet-getitem?mr=1102938}
}

@misc{huisken_isoperimetric_2009,
  title = {An isoperimetric concept for the mass in general relativity},
  author = {Huisken, Gerhard},
  year = {2009},
  url = {https://www.ias.edu/video/marston-morse-isoperimetric-concept-mass-general-relativity}
}

@article{huisken_inverse_2001,
  title = {The {{Inverse Mean Curvature Flow}} and the {{Riemannian Penrose Inequality}}},
  author = {Huisken, Gerhard and Ilmanen, Tom},
  year = {2001},
  month = nov,
  journal = {Journal of Differential Geometry},
  volume = {59},
  number = {3},
  pages = {353--437},
  publisher = {Lehigh University},
  issn = {0022-040X},
  doi = {10.4310/jdg/1090349447}
}

@article{jauregui_adm_2023,
  title = {{{ADM}} Mass and the Capacity-Volume Deficit at Infinity},
  author = {Jauregui, Jeffrey L.},
  year = {2023},
  journal = {Communications in Analysis and Geometry},
  volume = {31},
  number = {6},
  pages = {1565--1610},
  issn = {1019-8385,1944-9992},
  doi = {10.4310/cag.2023.v31.n6.a7},
  mrnumber = {4785567}
}

@article{jauregui_lower_2019,
  author = {Jauregui, Jeffrey L. and Lee, Dan A.},
  year = {2019},
  journal = {Journal Fur Die Reine Und Angewandte Mathematik},
  shortjournal = {Crelle},
  volume = {756},
  pages = {227--257},
  issn = {0075-4102},
  doi = {10.1515/crelle-2017-0007},
  fjournal = {Journal für die Reine und Angewandte Mathematik. [Crelle's Journal]},
  title = {Lower semicontinuity of mass under {$C^{0}$} convergence and {{Huisken}}'s isoperimetric mass}
}

@article{jauregui_note_2024,
  title = {A Note on {{Huisken}}'s Isoperimetric Mass},
  author = {Jauregui, Jeffrey L. and Lee, Dan A. and Unger, Ryan},
  year = {2024},
  month = nov,
  journal = {Letters in Mathematical Physics},
  volume = {114},
  number = {6},
  pages = {134},
  issn = {1573-0530},
  doi = {10.1007/s11005-024-01883-z},
  langid = {english}
}

@article{xiao_p-harmonic_2016,
  author = {Xiao, Jie},
  year = {2016},
  month = aug,
  journal = {Annales Henri Poincaré},
  shortjournal = {Ann. Henri Poincaré},
  volume = {17},
  number = {8},
  pages = {2265--2283},
  issn = {1424-0661},
  doi = {10.1007/s00023-016-0475-8},
  langid = {english},
  title = {The {$p$}-{{Harmonic Capacity}} of an {{Asymptotically Flat}} {$3$}-{{Manifold}} with {{Non-negative Scalar Curvature}}}
}
\endgroup

\end{document}